\newcommand{\pcite}[1]{\citeauthor{#1}'s \citeyearpar{#1}}
\newcommand{\df}{\mathrm{d}}
\newtheorem{theorem}{Theorem}
\newtheorem{corollary}[theorem]{Corollary}
\newtheorem{remark}[theorem]{Remark}
\newtheorem{proposition}[theorem]{Proposition}
\newcommand{\X}{{\mathsf{X}}}
\newcommand{\citepage}[2]{\citeauthor{#1} (\citeyear{#1}, #2)}
\title{\bf On the limitations of single-step drift and minorization in
	Markov chain convergence analysis}
	\author{Qian Qin \\ School of Statistics \\ University of Minnesota 
	\and James P. Hobert \\ Department of Statistics \\ University of Florida}
	\keywords{Convergence rate, Coupling, Geometric ergodicity, High-dimensional inference, Optimal bound, Quantitative bound, Renewal theory}
\begin{document}
	
	\maketitle
		
		\begin{abstract}
			Over the last three decades, there has been a considerable effort within
			the applied probability community to develop techniques for bounding the
			convergence rates of general state space Markov chains.  Most of these
			results assume the existence of drift and minorization (d\&m)
			conditions.  It has often been observed that convergence rate bounds
			based on single-step d\&m tend to be overly conservative, especially in
			high-dimensional situations.  This article builds a framework for
			studying this phenomenon.  It is shown that any convergence rate bound
			based on a set of d\&m conditions cannot do better than a certain
			unknown optimal bound.  Strategies are designed to put bounds on the
			optimal bound itself, and this allows one to quantify the extent to
			which a d\&m-based convergence rate bound can be sharp.  The new theory
			is applied to several examples, including a Gaussian autoregressive
			process (whose true convergence rate is known), and a Metropolis
			adjusted Langevin algorithm.  The results strongly suggest that
			convergence rate bounds based on single-step d\&m conditions are quite
			inadequate in high-dimensional settings.
		\end{abstract}

%		\begin{keyword}
%			\kwd{Convergence rate}
%			\kwd{Coupling}
%			\kwd{Geometric ergodicity}
%			\kwd{High-dimensional inference}
%			\kwd{Optimal bound}
%			\kwd{Quantitative bound}
%			\kwd{Renewal theory}
%		\end{keyword}

	\section{Introduction}
	\label{sec:intro}
	
	The performance of a Markov chain Monte Carlo (MCMC) algorithm is
	directly tied to the convergence rate of the underlying Markov chain.
	(As will be made precise below, the convergence rate is a number
	between~$0$ and~$1$, with smaller values corresponding to faster
	convergence.)  Unfortunately, ascertaining the convergence rates of
	even mildly complex Markov chains can be extremely difficult.  Indeed,
	over the last three decades, there has been a considerable effort
	within the applied probability community to develop techniques for
	(upper) bounding the convergence rates of general state space Markov
	chains.  Most of these results assume the existence of drift and
	minorization (d\&m) conditions for the chain under study.  In essence,
	the minorization condition guarantees that the chain is well-behaved
	on a subset of its state space, and the drift condition guarantees
	that the chain will visit that subset frequently.  By carefully
	combining the d\&m, one can construct a quantitative upper bound on
	the chain's convergence rate that is an explicit function of the
	parameters in the d\&m conditions \citep[see,
	e.g.,][]{meyn1994computable,rosenthal1995minorization,roberts1999bounds,douc2004practical,baxendale2005renewal,jerison2019}.
	However, it is well known that d\&m-based bounds are often overly
	conservative; that is, the upper bound is often very close to~$1$,
	even when the chain is known to (or at least appears to) converge
	rapidly.  An example of this phenomenon is provided later in this
	section.  Worse yet, the problem is often exacerbated by increasing
	dimension \citep{rajaratnam2015mcmc,qin2018wasserstein}.  These facts
	raise the following question: Are d\&m-based methods inadequate for
	constructing sharp convergence rate bounds in high-dimensional
	problems?  This question is difficult to answer because, in situations
	where the methods fail, there are several vastly different potential
	reasons for the failure, including the possibility that the particular
	set of d\&m conditions that were used is somehow faulty.  In this
	article, we provide a partial answer to the question posed above by
	studying the \textit{optimal} bound that can be produced using a set
	of d\&m conditions.  When applied to specific Markov chains, our
	results strongly suggest that d\&m arguments based on single-step
	transition laws are quite inadequate in high-dimensional settings.  In
	the remainder of this section, we provide an overview of our results.
	
	Suppose that $(\X, \mathcal{B})$ is a countably generated measurable
	space, and let $P:\X \times \mathcal{B} \to [0,1]$ be a Markov
	transition kernel (Mtk).  When the state space,~$\X$, is a commonly
	studied topological space (e.g., a Euclidean space),~$\mathcal{B}$ is
	assumed to be its Borel $\sigma$-algebra.  For any positive
	integer~$m$, let $P^m$ be the $m$-step transition kernel, so that $P^1
	= P$.  For any probability measure $\mu: \mathcal{B} \to [0,1]$ and
	measurable function $f: \X \to \mathbb{R}$, denote $\int_{\X} \mu(\df
	x) P^m(x, \cdot)$ by $\mu P^m (\cdot)$, and $\int_{\X} P^m(\cdot, \df
	x) f(x)$ by $P^m f(\cdot)$.  Also, let $L^2(\mu)$ denote the set of
	measurable, real-valued functions on~$\X$ that are square integrable
	with respect to $\mu(\df x)$.
	
	For the time being, assume that the Markov chain defined by~$P$ has a
	stationary probability measure~$\Pi$, so $\Pi = \Pi P$.  The goal of
	convergence analysis is to understand how fast $\mu P^m$ converges
	to~$\Pi$ as~$m \to \infty$ for a large class of initial laws~$\mu$.  The
	difference between $\mu P^m$ and $\Pi$ is usually measured using the
	total variation distance, which is defined as follows.  For two
	probability measures on $(\X, \mathcal{B})$, $\mu$ and $\nu$, their
	total variation distance is
	\[
	d_{\mbox{\scriptsize{TV}}} (\mu,\nu) = \sup_{A \in \mathcal{B}} \,
	[\mu(A) - \nu(A)] \,.
	\]
	The Markov chain is \textit{geometrically ergodic} if, for each $x \in
	\X$, $d_{\mbox{\scriptsize{TV}}}(\delta_x P^m, \Pi)$ decays at an
	exponential rate that is independent of~$x$ when $m \to \infty$, where
	$\delta_x$ is the point mass (Dirac measure) at~$x$.  In other words,
	the chain is geometrically ergodic if there exist $\rho < 1$ and $M:
	\X \to [0,\infty)$ such that, for each $x \in \X$ and positive
	integer~$m$,
	\begin{equation}
	\label{eq:geometric}
	d_{\mbox{\scriptsize{TV}}}(\delta_x P^m, \Pi) \leq M(x) \rho^m \,.
	\end{equation}
	Following the ideas in \citepage{roberts2001geometric}{page 40}, define
	\[
	\rho_*(P) = \exp \bigg[ \sup_{x \in \X} \, \limsup\limits_{m \to \infty}
	\frac{\log d_{\mbox{\scriptsize{TV}}}(\delta_x P^m, \Pi)}{m} \bigg]
	\,.
	\]
	It can be shown that $\rho_*(P) \in [0,1]$.
	%In the case where $P$ does not admit a stationary distribution, we
	%set $\rho_*(P)$ to be its maximal value, 1.
	If~\eqref{eq:geometric} holds for all~$x$
	and~$m$, then $\rho_*(P) \leq \rho$.  On the other hand, if $\rho >
	\rho_*(P)$, then for each $x \in \X$, there exists $M_x > 0$ such
	that, for each $m > M_x$,
	\[
	\frac{\log d_{\mbox{\scriptsize{TV}}}(\delta_x P^m, \Pi)}{m} <
	\log \rho \,.
	\]
	In this case,~\eqref{eq:geometric} holds with $M(x) = \rho^{-M_x} + 1$.  
	Thus, $\rho_*(P)$ can be regarded as the
	true (geometric) convergence rate of the chain, and the chain is
	geometrically ergodic if and only if $\rho_*(P) < 1$.  Essentially, if
	$\rho_*(P)$ is close to~$0$, then the chain converges rapidly, and the
	elements of the chain are nearly independent; if $\rho_*(P)$ is close
	to~$1$, then the chain mixes slowly, and the elements of the chain are
	strongly correlated.
	
	The quantity $\rho_*(P)$ plays an important role in the
        analysis of MCMC algorithms \citep[see,
          e.g.,][]{jones2001honest,roberts2004general,rajaratnam2015mcmc}.
        However, despite its significance, it is typically quite
        difficult to get a handle on $\rho_*(P)$ (outside of toy
        problems).  As mentioned above, there are a number of
        different methods for converting d\&m conditions for~$P$ into
        upper bounds on $\rho_*(P)$, and the majority of them fall
        into two categories: those based on renewal theory, and those
        based on coupling.  There are also several different forms of
        d\&m in the literature, and they are all quite similar.  We
        will study two particular versions of d\&m - one used by
        \citet{baxendale2005renewal} in conjunction with renewal
        theory, and another used by \citet{rosenthal1995minorization}
        in conjunction with coupling.  \pcite{baxendale2005renewal}
        d\&m conditions take the following form:
	\begin{enumerate}
		\item [(A1)] There exist $\lambda \in [0,1)$, $K \in
		[1,\infty)$, $C \in \mathcal{B}$, and measurable $V:
		\X \to [1,\infty)$ such that
		\[
		PV(x) \leq \lambda V(x) 1_{\X \setminus C}(x) + K1_C(x)
		\]
		for each $x \in \X$. \label{a1}
		\item [(A2)] There exist $\varepsilon \in (0,1]$ and a probability
		measure $\nu: \mathcal{B} \to [0,1]$ such that
		\[
		P(x,A) \geq \varepsilon \nu(A) \,
		\]
		for each $x \in C$ and $A \in \mathcal{B}$. \label{a2}
		\item[(A3)] There exists $\beta
		\in (0,1]$ such that $\nu(C) \geq
		\beta$. \label{a3} 
	\end{enumerate}
	
	\noindent Here, (A\hyperref[a1]{1}) and (A\hyperref[a2]{2})
        are called the drift condition and the minorization condition,
        respectively.  (A\hyperref[a3]{3}) is called the strong
        aperiodicity condition.  The function $V$ is the drift
        function, and~$C$ is referred to as a small set.  We call
        (A\hyperref[a1]{1}) and (A\hyperref[a2]{2}) ``single-step''
        drift and minorization because they only involve the one-step
        transition kernel~$P$.  An explicit upper bound on $\rho_*(P)$
        can be derived using (A\hyperref[a1]{1})-(A\hyperref[a3]{3}),
        and, typically, the bound is only a function of $(\lambda, K,
        \varepsilon,\beta)$.  Here's an example.
	
	\begin{theorem}{[\cite{baxendale2005renewal}, Theorem 1.3]}
		\label{thm:baxbound}
		Let~$P$ be a Mtk on $(\X,\mathcal{B})$.  Suppose that
		(A\hyperref[a1]{1})-(A\hyperref[a3]{3}) hold.
		Then~$P$ admits a unique stationary distribution, $\Pi$, and
		$\rho_*(P) < 1$.  Suppose further that~$P$ satisfies
		the following conditions:
		\begin{enumerate}
			\item[(S1)]  The chain is
			reversible, i.e., for each $f,g \in
			L^2(\Pi)$,
			\[
			\int_{\X} g(x) Pf(x) \, \Pi(\df x) = \int_{\X}
			f(x) Pg(x) \, \Pi(\df x) \,.
			\] \label{s1}
			\item[(S2)]  The chain is
			non-negative definite, i.e., for each $f \in
			L^2(\Pi)$,
			\[
			\int_{\X} f(x) Pf(x) \, \Pi(\df x) \geq 0 \,.
			\] \label{s2}
		\end{enumerate}
		Then
		\begin{equation} \label{eq:baxbound}
		\rho_*(P) \leq \max \big \{ \lambda,
		(1-\varepsilon)^{1/\alpha_*} \big \} 1_{\varepsilon < 1} +
		\lambda 1_{\varepsilon = 1} \,,
		\end{equation}
		where
		\[
		\alpha_* = \frac{\log \big[
			(K-\varepsilon)/(1-\varepsilon) \big] + \log
			\lambda^{-1}}{\log \lambda^{-1}} \,.
		\]
		(When $\varepsilon \in (0,1)$ and $\lambda = 0$,~$\alpha_*$ is
		interpreted as~$1$.)
	\end{theorem}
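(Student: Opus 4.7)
My plan is to follow a Nummelin-splitting plus renewal-theory paradigm, using reversibility and non-negative definiteness at the end to sharpen the renewal-theoretic bound into a spectral bound on $\rho_*(P)$. The first step is cheap: conditions (A1)--(A3) imply the existence of a unique stationary distribution~$\Pi$ and geometric ergodicity via the standard Meyn--Tweedie theory, which already gives $\rho_*(P)<1$. So the work lies in producing the explicit bound \eqref{eq:baxbound}.

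The core construction is the split chain. On the small set~$C$, decompose $P(x,\cdot)=\varepsilon\nu(\cdot)+(1-\varepsilon)R(x,\cdot)$, where $R$ is the residual kernel; then the split chain on $\X\times\{0,1\}$ has the true atom $\alpha:=C\times\{1\}$. Let $\tau$ be the first return time to~$\alpha$, and consider the generating function $G(z)=E_\alpha[z^\tau]$. I would use (A1) together with (A3) to produce a moment bound: since $\lambda^{-n}V(X_n)$ is a supermartingale up to the hitting time of~$C$, one can propagate the geometric rate $\lambda$ through each excursion away from~$C$; each return to~$C$ costs at most the factor $(K-\varepsilon)/(1-\varepsilon)$ by splitting off the minorizing mass and using $\nu V\le K/(1-\varepsilon)$ (computed from the drift inequality evaluated at the mixture). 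Iterating over excursions yields $G(z)<\infty$ for $z$ up to a threshold whose logarithm is exactly $\alpha_*\log\lambda^{-1}$. This is where the exponent $\alpha_*=1+\log[(K-\varepsilon)/(1-\varepsilon)]/\log\lambda^{-1}$ enters naturally: it measures the geometric cost per drift cycle relative to the coupling probability~$\varepsilon$.

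The final step is to convert the radius-of-convergence estimate for $G$ into an estimate on $\rho_*(P)$. For a general chain, one only gets the coarser Kendall-renewal bound. Under (S1), however, $P$ is self-adjoint on $L^2(\Pi)$, so $\rho_*(P)$ coincides with the $L^2_0(\Pi)$ operator norm of $P$; under (S2) the spectrum lies in $[0,1]$, which eliminates negative eigenvalues and makes the spectral radius coincide with the supremum of the spectrum. I would then express the resolvent $(I-zP)^{-1}$ restricted to $L^2_0(\Pi)$ in terms of the renewal kernel of the split chain; finiteness of $G(z)$ translates into boundedness of the resolvent at $z$, and hence $1/z$ is not in the spectrum. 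Taking $z$ up to the threshold derived in the previous paragraph yields the claimed bound $(1-\varepsilon)^{1/\alpha_*}$, while the trivial drift bound gives the $\lambda$ in the max.

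The main obstacle will be the spectral-renewal identification in the last step: rigorously tying the $L^2_0(\Pi)$ resolvent of the original chain to the split-chain renewal generating function, and verifying that the sharp exponent $1/\alpha_*$ survives this transfer rather than degrading to something like $1/(2\alpha_*)$. Reversibility is essential here (without it one has only a spectral bound in terms of the norm of $P^*P$, which would double the exponent), and non-negative definiteness is what removes the need for an extra factor coming from a possibly negative portion of the spectrum. The other routine--but tedious--piece will be the excursion-moment computation leading to the specific constant $(K-\varepsilon)/(1-\varepsilon)$; I would expect to verify it by induction on the number of returns to~$C$, using the drift inequality and the residual kernel identity together.
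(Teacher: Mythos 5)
This theorem is not proved in the paper at all: it is quoted verbatim from Baxendale (2005, Theorem~1.3) and used as a black box, so there is no in-paper argument to compare your proposal against. Measured against Baxendale's actual proof, your architecture is the right one --- Nummelin splitting to create the atom $\alpha = C\times\{1\}$, the return-time generating function $G(z)=E_\alpha[z^\tau]$ controlled by propagating the drift through excursions, and a first-passage/renewal decomposition of $\sum_n z^n\langle P^nf,f\rangle$ that, under (S1) and (S2), converts a bound on the abscissa of convergence into a bound on the spectrum of $P$ restricted to $L^2_0(\Pi)$. Your remarks about why reversibility and non-negative definiteness are needed (avoiding the passage through $P^*P$, which would halve the exponent, and eliminating the negative part of the spectrum) are also on target.

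Two quantitative points in the middle of the sketch are off and would need repair. First, the threshold you state for finiteness of $G$ --- logarithm equal to $\alpha_*\log\lambda^{-1}$, i.e.\ $z<\lambda^{-\alpha_*}=\lambda^{-1}(K-\varepsilon)/(1-\varepsilon)$ --- cannot be what drives the bound: if the relevant singularity were at $\lambda^{-\alpha_*}$ you would conclude $\rho_*(P)\le\lambda^{\alpha_*}<\lambda$, which is false in general and inconsistent with the $(1-\varepsilon)^{1/\alpha_*}$ you claim to extract two sentences later. The correct mechanism is that each regeneration attempt fails with probability $1-\varepsilon$ and a failed cycle lasts on the order of $\alpha_*$ steps (one residual step landing at a point with $V\le(K-\varepsilon)/(1-\varepsilon)$, then $\alpha_*-1$ steps of geometric descent at rate $\lambda$ back into $C$), so the governing equation is $(1-\varepsilon)\,r^{\alpha_*}=1$, giving the threshold $r=(1-\varepsilon)^{-1/\alpha_*}$ (capped by $\lambda^{-1}$, whence the $\max$ with $\lambda$). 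This is exactly the cycle structure the paper exploits in reverse when it builds the slow chain in the proof of its Theorem~\ref{thm:paraoptima}. Second, the constant $(K-\varepsilon)/(1-\varepsilon)$ does not come from ``$\nu V\le K/(1-\varepsilon)$''; it is the bound on the residual kernel, $RV(x)=\big[PV(x)-\varepsilon\,\nu V\big]/(1-\varepsilon)\le(K-\varepsilon)/(1-\varepsilon)$ for $x\in C$, using $\nu V\ge 1$ because $V\ge 1$. With those two corrections the plan tracks Baxendale's argument; the remaining work (the rigorous resolvent--renewal identification on $L^2_0(\Pi)$) is substantial but is carried out in the cited reference.
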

	
	\begin{remark}
		\citet{baxendale2005renewal} also provides results for
                chains that are reversible, but not non-negative
                definite, and also for chains that are neither
                reversible nor non-negative definite.  However, those
                bounds are much more complex than \eqref{eq:baxbound}.
                In particular,~$\beta$ does not
                enter~\eqref{eq:baxbound}, but does appear in the
                other bounds.
	\end{remark}
	\begin{remark}
		\citet{jerison2019} uses the theory of \textit{strong random times} to
		extend and improve upon some of \pcite{baxendale2005renewal} results,
		but is unable to improve upon the convergence rate bound given in
		\eqref{eq:baxbound}.
	\end{remark}

	Results of this type have proven to be extremely useful for
	establishing the geometric ergodicity of Markov chains on continuous
	state spaces.  That is, for establishing the \textit{qualitative}
	result that $\rho_*(P) < 1$.  However, upper bounds on $\rho_*(P)$
	that are constructed based on single-step drift and minorization, such
	as~\eqref{eq:baxbound}, have a reputation of being very conservative.
	In practice, it's not unusual for a bound of this type to be very
	close to unity, even when the chain being studied apparently converges
	quite rapidly.  The following example illustrates this situation.
	
	Let $\X = \mathbb{R}^{10}$, and let~$P$ be given by
	\[
	P(x, \df y) \propto \exp \left( -\frac{2}{3} \left\| y - \frac{x}{2}
	\right\|^2 \right) \df y \,,
	\]
	where $\|\cdot\|$ is the Euclidean norm.  So $P$ defines a Gaussian
	autoregressive chain on~$\X$.  The $10$-dimensional standard Gaussian
	distribution is the unique stationary distribution of the
	corresponding Markov chain.  The chain is reversible, non-negative
	definite, and it is well-known that $\rho_*(P) = 0.5$.  Let us now
	pretend that we do not know the true convergence rate, and consider
	using Theorem~\ref{thm:baxbound} to form an upper bound on
	$\rho_*(P)$.  In order to apply the theorem, we must establish
	(A\hyperref[a1]{1})-(A\hyperref[a3]{3}).  A standard drift function to
	use is $V(x) = \|x\|^2/k + 1$, where $k$ can be tuned.  We take
	$k=100$, since this appears to give good results.  The small set~$C$
	is usually chosen to be $\{x\in \mathbb{R}^{10}: V(x) \leq d\}$, where
	$d \geq 1$ can be optimized.  In this case, (A\hyperref[a1]{1}) holds
	whenever $d > 1+10/k = 1.1$.  In fact,
	\[
	PV(x) \leq \frac{10d+33}{40d} V(x) 1_{\X \setminus C}(x) +
	\frac{10 d+33}{40} 1_C(x) \,.
	\]
	Moreover, for each $d > 1.1$, $a > 0$, $x \in C$ and $A \in
	\mathcal{B}$,
	\begin{align*}
	&P(x,A) \\
	&= \int_{A} \frac{1}{(3\pi/2)^5} \exp \left(
	-\frac{2}{3} \left\| y - \frac{x}{2} \right\|^2 \right) \, \df
	y \\ & \geq \int_A \frac{1}{(3\pi/2)^5} \inf_{ \|x'\|^2/k + 1
		\leq d} \exp \left( -\frac{2}{3} \left\| y - \frac{x'}{2}
	\right\|^2 \right) \, \df y \\ & \geq \int_A
	\frac{1}{(3\pi/2)^5} \inf_{ \|x'\|^2/k + 1 \leq d} \exp
	\left\{ -\frac{2}{3} \left[ (1+a)\|y\|^2 + \left( 1 +
	\frac{1}{a} \right) \left\| \frac{x'}{2} \right\|^2 \right]
	\right\} \, \df y \\ & \geq \int_A \frac{1}{(3\pi/2)^5} \exp
	\left[ -\frac{2(a+1)}{3} \|y\|^2 - \frac{100(a+1)(d-1)}{6a}
	\right] \, \df y \\ &= \frac{1}{(a+1)^5} \exp \left[
	-\frac{100(a+1)(d-1)}{6a} \right] \nu(A) \,,
	\end{align*}
	where $\nu$ is the $10$-dimensional normal distribution with
	mean~$0$ and variance $3 I_{10} / [4(a+1)]$, with $I_{10}$
	being the $10 \times 10$ identity matrix.  Thus,
	(A\hyperref[a2]{2}) holds.  It's obvious that
	(A\hyperref[a3]{3}) holds as well.  Applying
	Theorem~\ref{thm:baxbound}, and optimizing over $(a,d)$,
	% letting $(a,d) = (0.85, 1.12)$
	leads to the following result: $\rho_*(P) \le 0.99993$.  This bound is
	obviously extremely conservative - recall that $\rho_*(P) = 0.5$.
	This leads to the following intriguing question: (Q1) Is this terrible
	bound on $\rho_*(P)$ a result of the \textit{way} in which
	Theorem~\ref{thm:baxbound} was applied (i.e., a poorly chosen drift
	function, loose inequalities in the d\&m, etc.), or is it simply
	impossible to use Theorem~\ref{thm:baxbound} to produce a sharp bound
	in this example?  Another interesting (and more general) question is
	this: (Q2) Assuming that~\eqref{eq:baxbound} is not the best possible
	bound that can be constructed using
	(A\hyperref[a1]{1})-(A\hyperref[a3]{3}), how much better could we hope
	to do?

	The two questions posed in the previous paragraph can be
        answered using results developed in
        Section~\ref{sec:limitation}, which is where we introduce a
        general framework for evaluating the effectiveness of a set of
        d\&m conditions for bounding convergence rates.  Our answer to
        (Q2) is that Baxendale's bound is actually quite tight; see
        Remark~\ref{rem:tight} in Subsection~\ref{ssec:paraoptima}.
        We prove this by constructing a reversible, non-negative definite Markov chain that satisfies
        (A\hyperref[a1]{1})-(A\hyperref[a3]{3}), and has a convergence
        rate that is only slightly smaller than the right-hand side of
        \eqref{eq:baxbound}.  We answer (Q1) in
        Subsection~\ref{sssec:gaussian1} by proving that it is \textit{impossible} to use d\&m in the form of
        (A\hyperref[a1]{1})-(A\hyperref[a3]{3}) to get a tight upper
        bound on $\rho_*(P)$ in the Gaussian autoregressive example.
        In fact, we show that no upper bound derived from
        (A\hyperref[a1]{1})-(A\hyperref[a3]{3}) can be less than 0.922
        - a far cry from the true value of 0.5.

        Our answer to (Q1) is based on an application of
        Theorem~\ref{thm:modeloptima1} in
        Subsection~\ref{ssec:modeloptima}, which is one of our main
        results.  It provides a lower bound on the best possible upper
        bound that could be constructed using
        (A\hyperref[a1]{1})-(A\hyperref[a3]{3}).  In particular, if
        $P$ satisfies (A\hyperref[a1]{1})-(A\hyperref[a3]{3}) and
        $\Pi$ is its stationary measure, then the best upper bound on
        $\rho_*(P)$ that we could possibly get is bounded below by
        \begin{equation}
          \label{eq:lb_intro}
          \inf_{C \in \mathcal{B}: \Pi(C) > 0} \left[
            (1-\varepsilon_C)^{\lfloor 1/\Pi(C) \rfloor^{-1}} \right]
          \,,
        \end{equation}
        where $\lfloor \cdot \rfloor$ returns the largest integer that
        does not exceed its argument, and
	\begin{equation} 
          \label{eq:varepsilonC}
	\begin{aligned}
	\varepsilon_C = &\sup\{ \varepsilon \in (0,1]: \\
	& \mbox{
		(A\hyperref[a2]{2}) holds for } P \mbox{ and } C
	\mbox{ with } \varepsilon \mbox{ and some probability
		measure } \nu \} \,,
	\end{aligned}
	\end{equation}
	where the right-hand side of \eqref{eq:varepsilonC} is
        interpreted as~$0$ if (A\hyperref[a2]{2}) doesn't hold on~$C$.
        An inspection of \eqref{eq:lb_intro} reveals a potentially
        serious limitation on convergence rate bounds based on
        (A\hyperref[a1]{1})-(A\hyperref[a3]{3}).  First note that, if
        \eqref{eq:lb_intro} is near 1, then it is impossible to get a
        sharp bound on $\rho_*(P)$ if $\rho_*(P)$ is not close to 1.
        Moreover, \eqref{eq:lb_intro} will be far from 1 only if we
        can find a set $C$ such that $\varepsilon_C$ and $\Pi(C)$ are
        both large.  Unfortunately, $\varepsilon_C$ tends to decrease
        as $C$ gets larger, while $\Pi(C)$ tends to increase as $C$
        grows.  Our examples reveal that, when dimension is high and
        $\Pi$ tends to ``spread out,'' the required ``Goldilocks'' $C$
        may not exist, even when $\rho_*(P)$ is not close to~$1$.

	At this point, we should make clear that, in theory, the
        problems with d\&m that are described above can be
        circumvented by moving from single-step d\&m to
        \textit{multi-step} d\&m.  Indeed, it is well known that, if
        one can establish d\&m conditions based on multi-step
        transition kernels, then the resultant convergence rate bounds
        can actually be quite well-behaved, even in high-dimensional
        settings where the single-step bounds fail completely.
        (See \citepage{qin2018wasserstein}{Appendix~A} for an example
        involving the Gaussian autoregressive process described
        above.)  Unfortunately, in practical situations where the
        transition law is highly complex (such as in MCMC), developing
        d\&m conditions with multi-step Mtks is usually impossible.
        For this reason, multi-step d\&m is seldom used in practice.
        For a more detailed discussion on this issue,
        see, e.g., Section~2 of \cite{qin2018wasserstein}.

	The optimality of convergence rate bounds based on d\&m has
        been touched on in previous work
        \citep{meyn1994computable,lund1996geometric,roberts2000rates,baxendale2005renewal,jerison2016drift,qin2018wasserstein}.
        For instance, \citepage{meyn1994computable}{page 986} compared
        their convergence rate bound with existing results for a
        certain class of chains, and concluded that their bound
        ``cannot be expected to be tight.''  In fact,
        \citet{baxendale2005renewal} developed a tighter version of
        their bound about a decade later.  However, to our knowledge,
        there has been no prior systematic study of the limitations of
        the d\&m methodology in general.
	
	The rest of this article is organized as follows.  In
        Section~\ref{sec:limitation}, we introduce our general
        framework for studying the limitations of a set of d\&m
        conditions for bounding convergence rates.  Within this
        framework, we analyze the sharpness of
        \pcite{baxendale2005renewal} bound, and, more generally, we
        consider optimal bounds based on
        (A\hyperref[a1]{1})-(A\hyperref[a3]{3}).  Our results are
        applied to the Gaussian autoregressive example, and also to
        the Metropolis adjusted Langevin algorithm (MALA) in a
        situation where the target is high-dimensional.  In
        Section~\ref{sec:coupling}, we study optimal bounds based on
        \pcite{rosenthal1995minorization} d\&m conditions, and the
        results are applied to the Gaussian autoregressive example,
        MALA, and two random walks on graphs.  Potential strategies
        for overcoming the limitations of single-step d\&m are
        discussed in Section~\ref{sec:beyondoptima}.  The Appendix
        contains several technical formulas and proofs.

	\section{Quantifying the Limitations of Bounds Based on (A\hyperref[a1]{1})-(A\hyperref[a3]{3})}
	\label{sec:limitation}
	
	\subsection{Parameter-specific optimal bound}
	\label{ssec:paraoptima}
	
	Consider a generic upper bound on the convergence rate that is based
	on (A\hyperref[a1]{1})-(A\hyperref[a3]{3}).  If this bound depends on
	(A\hyperref[a1]{1})-(A\hyperref[a3]{3}) only through the d\&m
	parameter, $(\lambda,K,\varepsilon,\beta)$, then we call it a
	\textit{simple} upper bound.  Just to be clear, a simple upper bound
	cannot use the drift function, $V(\cdot)$, the small set $C$, nor the
	minorization measure $\nu(\cdot)$.  As an example, the bound in
	Theorem~\ref{thm:baxbound} is simple.  Now fix a value of
	$(\lambda,K,\varepsilon,\beta)$.  
	Evidently, the smallest possible
	simple upper bound on the convergence rate of any Markov chain that
	satisfies (A\hyperref[a1]{1})-(A\hyperref[a3]{3}) with this value of
	the d\&m parameter is equal to the convergence rate of the
	\textit{slowest} Markov chain among \textit{all} the chains that
	satisfy (A\hyperref[a1]{1})-(A\hyperref[a3]{3}) with this particular
	value of $(\lambda,K,\varepsilon,\beta)$.  
	(To be more precise, it's the supremum of the convergence rates of these chains.)
	In this section, we will
	approximate this \textit{optimal} simple upper bound by
	constructing a class of slowly converging Markov chains that satisfy
	(A\hyperref[a1]{1})-(A\hyperref[a3]{3}).
	
	For each fixed value of $(\lambda,K,\varepsilon,\beta)$ in the set
	$T_0 := [0,1) \times [1,\infty) \times (0,1] \times (0,1]$, let
	$S^{(N)}_{\lambda,K,\varepsilon,\beta}$ denote the collection
	of reversible and non-negative definite Mtks that satisfy (A\hyperref[a1]{1})-(A\hyperref[a3]{3}) with that
	d\&m parameter.  We do not require these chains to have a common state space.
	As an example, consider
	$S^{(N)}_{0,1,1,1}$, which consists of all reversible and non-negative definite Mtks that satisfy the following:
	\begin{enumerate}
		\item There exist $C \in \mathcal{B}$ and measurable $V: \X \to
		[1,\infty)$ such that for each $x \in \X$,
		\begin{equation} \label{eq:drift-0}
		PV(x) \leq 1_C(x) \,.
		\end{equation}
		\item There exists a probability measure $\nu:
		\mathcal{B} \to [0,1]$ such that for each $x \in C$
		and $A \in \mathcal{B}$,
		\begin{equation} \label{eq:minor-0}
		P(x,A) \geq \nu(A) \,.
		\end{equation}
		\item  The following is satisfied:
		\begin{equation} \label{eq:strongaper-0}
		\nu(C) = 1\,.
		\end{equation}
	\end{enumerate}
	Since the function $V$ is bounded below by~$1$, \eqref{eq:drift-0} can
	hold only if $C = \X$.  Hence, when~\eqref{eq:drift-0} holds, so
	does~\eqref{eq:strongaper-0}, and~\eqref{eq:minor-0} holds if and only
	if $P(x,\cdot) = \nu(\cdot)$ for each $x \in \X$.  So, in fact,
	$S^{(N)}_{0,1,1,1}$ consists precisely of all Mtks that define a trivial
	(independent) Markov chain on some countably generated state space.

	\begin{remark}
		Strictly speaking, $S^{(N)}_{\lambda,K,\varepsilon,\beta}$ is too large to be considered as a proper set in an axiomatic sense.
		In principle, one can impose appropriate restrictions on $S^{(N)}_{\lambda,K,\varepsilon,\beta}$ to make it a set.
		For instance, one can assume that the collection of all the measurable spaces on which the Mtks in $S^{(N)}_{\lambda,K,\varepsilon,\beta}$ are defined is a set.
		We assume throughout that $S^{(N)}_{\lambda,K,\varepsilon,\beta}$ is indeed a set, and that it contains at least all reversible and non-negative definite Mtks whose state spaces are finite sets of integers that satisfy (A\hyperref[a1]{1})-(A\hyperref[a3]{3}) with d\&m parameter $(\lambda,K,\varepsilon,\beta)$.
		Analogous assumptions will be imposed on similar constructions in Section~\ref{ssec:coupling} and Appendix~\ref{app:nonrev}.
	\end{remark}

	For $(\lambda,K,\varepsilon,\beta) \in T_0$, $S^{(N)}_{\lambda,K,\varepsilon,\beta}$ is non-empty, since $S^{(N)}_{0,1,1,1} \subset S^{(N)}_{\lambda,K,\varepsilon,\beta}$.
	The parameter-specific optimal bound corresponding to d\&m parameter 
	$(\lambda,K,\varepsilon,\beta)$ for reversible and non-negative chains is defined as follows:
	\[
	\rho^{(N)}_{\mbox{\scriptsize{opt}}}(\lambda,K,\varepsilon,\beta) = \sup_{P
		\in S^{(N)}_{\lambda,K,\varepsilon,\beta}} \rho_*(P) \;.
	\]
%	\begin{remark}
%		Strictly speaking, $S^{(N)}_{\lambda,K,\varepsilon,\beta}$ is not necessarily a properly defined set in an axiomatic sense, although, formally, we treat it as one.	
%		When $S^{(N)}_{\lambda,K,\varepsilon,\beta}$ is not assumed to be a set, taking supremum over it is not a standard procedure.
%		However, the notion of a best possible simple upper bound still exists, and $\rho^{(N)}_{\mbox{\scriptsize{opt}}}(\lambda,K,\varepsilon,\beta)$ will be understood as such.
%		The lack of a completely rigorous definition is not a problem, since, ultimately, we will only be concerned with bounds on $\rho^{(N)}_{\mbox{\scriptsize{opt}}}(\lambda,K,\varepsilon,\beta)$.
		%		In principle, one can impose restrictions on $S^{(N)}_{\lambda,K,\varepsilon,\beta}$, e.g., by being less arbitrary about the state space~$\X$, to make it small enough to become a set.
		%		We will assume that $S^{(N)}_{\lambda,K,\varepsilon,\beta}$ contains at least all reversible and non-negative definite Mtks on finite state spaces that satisfy (A\hyperref[a1]{1})-(A\hyperref[a3]{3}) with d\&m parameter $(\lambda,K,\varepsilon,\beta)$.
		%		Regardless, it's always true that any simple upper bound based on (A\hyperref[a1]{1})-(A\hyperref[a3]{3}) with parameter $(\lambda, K, \varepsilon, \beta)$ and (S\hyperref[s1]{1}) and (S\hyperref[s2]{2}) cannot be smaller than $\rho_*(P)$ whenever $P \in S^{(N)}(\lambda,K,\varepsilon,\beta)$.
		%		This is all we need for the subsequent theories.
%	\end{remark}
	Consider the significance of
	$\rho^{(N)}_{\mbox{\scriptsize{opt}}}(\lambda,K,\varepsilon,\beta)$.  If $\rho(\lambda,K,\varepsilon,\beta)$ is a simple upper bound constructed from (A\hyperref[a1]{1})-(A\hyperref[a3]{3}) with d\&m parameter
	$(\lambda,K,\varepsilon,\beta)$ in conjunction with (S\hyperref[s1]{1}) and (S\hyperref[s2]{2}) (reversibility and non-negative definiteness), then it must be no smaller than $\rho^{(N)}_{\mbox{\scriptsize{opt}}}(\lambda,K,\varepsilon,\beta)$.
	If $\rho(\lambda,K,\varepsilon,\beta)$ is constructed without the assumption of reversibility and non-negative definiteness, it will be no better, and thus also lower bounded by $\rho^{(N)}_{\mbox{\scriptsize{opt}}}(\lambda,K,\varepsilon,\beta)$.
	(See Appendix~\ref{app:nonrev} for a detailed explanation.)
	Hence, for $P \in S^{(N)}_{\lambda,K,\varepsilon,\beta}$,
	if 
%	$\rho^{(N)}_{\mbox{\scriptsize{opt}}}(\lambda,K,\varepsilon,\beta)$ is substantially larger than $\rho_*(P)$,
	there is another Mtk in $S^{(N)}_{\lambda,K,\varepsilon,\beta}$ that
	converge substantially slower than $P$, 
	then no simple upper bound based on this value of $(\lambda,K,\varepsilon,\beta)$ can
	provide a tight bound on $\rho_*(P)$.  
	Of course, $P$ may satisfy (A\hyperref[a1]{1})-(A\hyperref[a3]{3}) with many different values of $(\lambda,K,\varepsilon,\beta)$.
	We will confront this
	complication in the next subsection.

	While it is usually impossible to calculate parameter-specific optimal
	bounds exactly, it's easy to find lower bounds.  Fix
	$(\lambda,K,\varepsilon,\beta) \in T_0$, and suppose that
	$P_{\lambda,K,\varepsilon,\beta} \in S^{(N)}_{\lambda,K,\varepsilon,\beta}$.
	Then
	\[
	\rho^{(N)}_{\mbox{\scriptsize{opt}}}(\lambda,K,\varepsilon,\beta) \ge
	\rho_*(P_{\lambda,K,\varepsilon,\beta}) \;.
	\]
	Now, if for each $(\lambda,K,\varepsilon,\beta) \in T_0$, we can find
	a particularly slowly converging chain in
	$S^{(N)}_{\lambda,K,\varepsilon,\beta}$, then we can construct a good lower
	bound on $\rho^{(N)}_{\mbox{\scriptsize{opt}}}$.  Indeed, this is precisely
	how the next theorem is proven.

	\begin{theorem}
		\label{thm:paraoptima}
		For each $(\lambda,K,\varepsilon,\beta) \in T_0$, we have
		\begin{equation}
		\label{eq:paraoptima}
		\rho^{(N)}_{\mbox{\scriptsize{opt}}}(\lambda,K,\varepsilon,\beta)
		\geq \max\{ \lambda, (1-\varepsilon)^{1/\alpha} \}
		1_{\varepsilon < 1} + \lambda 1_{\varepsilon = 1} \,,
		\end{equation}
		where
		\[
		\alpha = \lfloor \alpha_* \rfloor = \left\lfloor \frac{\log
			[(K-\varepsilon) / (1-\varepsilon) ] + \log \lambda^{-1}}{\log
			\lambda^{-1}} \right\rfloor \,.
		\]
		(When $\varepsilon \in (0,1)$ and $\lambda = 0$,~$\alpha_*$ is
		interpreted as~$1$.)
	\end{theorem}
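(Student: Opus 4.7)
The plan is to prove the bound by exhibiting, for each $(\lambda,K,\varepsilon,\beta)\in T_0$, specific reversible, non-negative-definite Markov chains in $S^{(N)}_{\lambda,K,\varepsilon,\beta}$ whose convergence rates meet the right-hand side of \eqref{eq:paraoptima}. Since $\rho^{(N)}_{\mathrm{opt}}$ is defined as a supremum and the right-hand side is a maximum of at most two terms, it suffices to produce two separate chains - one matching the $\lambda$ term and one matching the $(1-\varepsilon)^{1/\alpha}$ term - and take the supremum of their rates.

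The $\lambda$ piece is routine. The two-state symmetric chain on $\{0,1\}$ with $P(0,1)=P(1,0)=(1-\lambda)/2$ has non-trivial eigenvalue $\lambda$ and, with $V\equiv 1$, $C=\X$, and uniform $\nu$, satisfies (A\hyperref[a1]{1})-(A\hyperref[a3]{3}) with d\&m parameter $(0,1,1-\lambda,1)$, placing it in $S^{(N)}_{\lambda,K,\varepsilon,\beta}$ whenever $\varepsilon\leq 1-\lambda$. In the complementary regime $\varepsilon>1-\lambda$ the two-state chain cannot carry enough minorization, so I would switch to a slightly larger symmetric chain (e.g., three states with a singleton small set and a case-specific drift function $V$ adapted to $(\lambda,K)$), verifying rate $\lambda$ and (A\hyperref[a1]{1})-(A\hyperref[a3]{3}) by direct eigenvalue and drift computations.

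For the $(1-\varepsilon)^{1/\alpha}$ piece - the main case - the natural candidate is a chain on $\{0,1,\ldots,\alpha-1\}$ with $C=\{0\}$, $\nu=\delta_0$, drift function $V(k)=\lambda^{-k}$, and a regenerate-or-leave structure from $0$ (stay with probability $\varepsilon$; otherwise jump to a state whose $V$-value saturates the drift budget). The inequality $\alpha\leq\alpha_*$ is exactly what makes $PV(0)\leq K$ hold, so (A\hyperref[a1]{1})-(A\hyperref[a3]{3}) are satisfied with the prescribed parameters. The core heuristic is renewal-theoretic: if the chain returns to $C$ roughly every $\alpha$ steps and regenerates with probability $\varepsilon$ per return, the per-step rate is $(1-\varepsilon)^{1/\alpha}$, as a characteristic-polynomial / renewal computation confirms.

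The obstacle is that the straightforward ``deterministic cycle'' realization of this heuristic is non-reversible and thus does not lie in $S^{(N)}$. To remedy this, I would build a reversible, non-negative-definite analogue whose rate is at least $(1-\varepsilon)^{1/\alpha}$ - either via a skew-product construction that couples the cycle structure to a reversible auxiliary factor so as to restore detailed balance without reducing the spectral radius, or via a sequence of reversible non-negative-definite perturbations whose rates converge to $(1-\varepsilon)^{1/\alpha}$ from below (a limit argument compatible with the supremum definition of $\rho^{(N)}_{\mathrm{opt}}$). The delicate step, and where I expect most of the effort to go, is verifying that the reversibilization preserves d\&m parameters at least as strong as $(\lambda,K,\varepsilon,\beta)$: naive symmetrizations (lazy averaging, time-reversal averaging, additional laziness) tend either to weaken the minorization constant below $\varepsilon$ or to inflate the drift beyond $(\lambda,K)$, so a tailored construction is required.
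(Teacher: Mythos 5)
Your skeleton matches the paper's --- two separate chains, one for each term of the maximum, with the main term realized by a regenerate-or-countdown cycle whose drift verification hinges on $\alpha\le\alpha_*$ --- but there is a genuine gap exactly where you flag it: you never actually produce a chain in $S^{(N)}_{\lambda,K,\varepsilon,\beta}$ for the $(1-\varepsilon)^{1/\alpha}$ term. The irreducible cycle chain you describe is non-reversible, and neither of the two repairs you sketch (a skew-product, or a sequence of reversible non-negative definite perturbations) is constructed; as you yourself observe, generic reversibilizations degrade either the minorization constant or the drift pair $(\lambda,K)$, and since the theorem is an existence statement, gesturing at a ``tailored construction'' leaves the main case unproved. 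The same incompleteness affects the $\lambda$ term in the regime $\varepsilon>1-\lambda$, where the larger symmetric chain is only hypothesized.

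The idea you are missing is that (S\hyperref[s1]{1}) and (S\hyperref[s2]{2}) are $L^2(\Pi)$ conditions, so they hold automatically for any chain whose stationary distribution is a point mass: if $\Pi=\delta_0$ then $P(0,\cdot)=\delta_0$, hence $Pf(0)=f(0)$ and both conditions reduce to trivial identities. The paper therefore appends an absorbing state: on $\{0,1,\dots,\alpha\}$, states $\ge 2$ count down deterministically, state $1$ absorbs into $0$ with probability $\varepsilon$ and otherwise jumps to $\alpha$, with $C=\{0,1\}$, $\nu=\delta_0$, and essentially your drift function. Then $\Pi=\delta_0$, reversibility and non-negative definiteness are free, $d_{\mathrm{TV}}(\delta_x P^m,\delta_0)=\mathbb{P}(X_m\neq 0)$ decays no faster than $(1-\varepsilon)^{m/\alpha}$ because each length-$\alpha$ excursion absorbs with probability only $\varepsilon$, and the d\&m verification is the one you already wrote. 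The same device disposes of the $\lambda$ term uniformly in $\varepsilon$: a two-state chain with $0$ absorbing and $P(1,\{1\})=\lambda-\delta$ lies in $S^{(N)}_{\lambda,K,\varepsilon,\beta}$ for every parameter value (the singleton small set $\{0\}$ yields $\varepsilon=1$ minorization for free) and has rate $\lambda-\delta$; let $\delta\downarrow 0$. So your plan is repairable, but by replacing the irreducible cycle with this absorbing variant rather than by reversibilizing it.
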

	
%	\begin{remark}
%		The reader may have noticed that we have not mentioned any of the
%		``usual regularity conditions'' like $\psi$-irreducibility or
%		aperiodicity.  This is because these conditions are implied by
%		geometric ergodicity, which is guaranteed under
%		(A\hyperref[a1]{1})-(A\hyperref[a3]{3}).
%	\end{remark}
	
	\begin{proof}
		It is straightforward to verify that
		\[
		\max\{ \lambda, (1-\varepsilon)^{1/\alpha} \} 1_{\varepsilon < 1} +
		\lambda 1_{\varepsilon = 1} = \max \{ (1-\varepsilon)^{1/\alpha}
		1_{\varepsilon < 1}, \lambda \} \,,
		\]
		so it suffices to show that
		\begin{equation}
		\nonumber \rho^{(N)}_{\mbox{\scriptsize{opt}}} (\lambda, K,
		\varepsilon, \beta) \geq \max \{ (1-\varepsilon)^{1/\alpha}
		1_{\varepsilon < 1}, \lambda \} \,.
		\end{equation}
		We first prove that
		\begin{equation} 
		\label{eq:example-1}
		\rho^{(N)}_{\mbox{\scriptsize{opt}}} (\lambda, K, \varepsilon,
		\beta) \geq (1-\varepsilon)^{1/\alpha} 1_{\varepsilon < 1} \;.
		\end{equation}
		Indeed, for each value of $(\lambda, K, \varepsilon, \beta) \in T_0$
		such that $\varepsilon < 1$, we identify a $P_{\lambda, K,
			\varepsilon, \beta} \in S^{(N)}_{\lambda,K,\varepsilon,\beta}$ such
		that
		\begin{equation}
		\label{eq:example-case1-1}
		\rho_*(P_{\lambda, K, \varepsilon, \beta}) \geq
		(1-\varepsilon)^{1/\alpha} \,.
		\end{equation}
		Fix $\varepsilon < 1$, let $(\lambda, K, \beta) \in [0,1) \times
		[1,\infty) \times (0,1]$ be arbitrary, and note that $\alpha \ge 1$.
		Let $P_{\lambda, K, \varepsilon, \beta}$ be the Mtk of a Markov chain $\{X_m\}_{m=0}^{\infty}$ on the state space $\X = \{0,1, \dots, \alpha \}$ that adheres to the following rules:
		\begin{itemize}
			\item If $X_m = 0$, then $X_{m+1} = 0$.
			\item If $X_m = 1$, then with probability $\varepsilon$, $X_{m+1}
			= 0$, and with probability $1-\varepsilon$, $X_{m+1} = \alpha$.
			\item If $X_m \geq 2$, then $X_{m+1} = X_m-1$.
		\end{itemize}
		A Markov transition diagram for this chain is shown in
		Figure~\ref{fig:fig1}.
		(Since we do not require Mtks in $S^{(N)}_{\lambda,K,\varepsilon,\beta}$ to have a common state space, no generality is lost from focusing on a discrete state space.)

		\begin{figure}[ht]
			\centering
			\includegraphics[width=0.6\textheight]{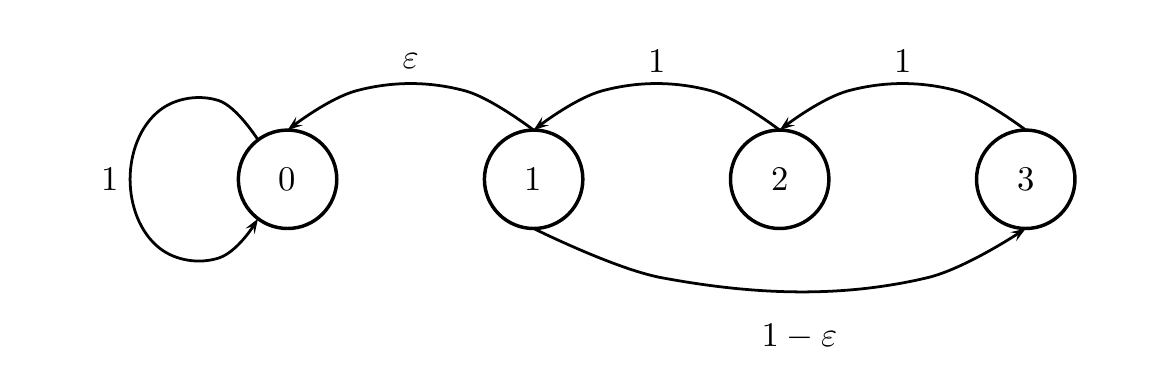}
			\caption{Markov transition diagram for the Markov chain
				$\{X_m\}_{m=0}^\infty$ when $\alpha=3$.}
			\label{fig:fig1}
		\end{figure}
		
		The chain admits a unique stationary distribution, which is precisely
		the point mass at~$0$, and it's straightforward to verify that the
		chain is reversible and non-negative definite.  
		(It is also aperiodic and
		$\psi$-irreducible, although it's \textit{not} irreducible in the
		classical sense that any two states communicate with each other.
		See \citepage{meyn2009markov}{Chapters 4\&5} for definitions.)
		
		We now verify that $P_{\lambda,K,\varepsilon,\beta}$ does satisfy
		(A\hyperref[a1]{1}), (A\hyperref[a2]{2}), and (A\hyperref[a3]{3}) with
		parameters $(\lambda, K, \varepsilon, \beta)$, i.e., $P_{\lambda, K,
			\varepsilon, \beta} \in S^{(N)}_{\lambda,K,\varepsilon,\beta}$.  To this end, let $C =
		\{0,1\}$, and define $V: \X \to [1,\infty)$ as follows.
		\begin{itemize}
			\item Let $V(0) = 1$, and let $V(\alpha) =
			(K-\varepsilon)/(1-\varepsilon)$.
			\item If $\alpha \geq 2$, let $V(x) = \lambda^{-x+1}$ for $x =
			1,2,\dots,\alpha-1$.
		\end{itemize}
		It's clear that (A\hyperref[a2]{2}) holds with $\nu$ being the point
		mass at~$0$.  
		(A\hyperref[a3]{3}) holds with $\beta = 1$, and thus with any~$\beta$.
		If $\alpha = 1$, then
		(A\hyperref[a1]{1}) obviously holds as well.  Assume
		that $\alpha \geq 2$.  Then $PV(0) = 1$, $PV(1) = K$,
		and thus, $PV(x) \leq K$ for each $x \in C$.  For $x =
		2,3,\dots,\alpha-1$ (if $\alpha \geq 3$), $PV(x) =
		V(x-1) = \lambda V(x)$.  Finally, noting that
		\[
		\alpha - 2 = \left\lfloor \frac{\log [(K-\varepsilon) / (1-\varepsilon) ] - \log \lambda^{-1}}{\log \lambda^{-1}} \right\rfloor \leq \frac{\log [(K-\varepsilon) / (1-\varepsilon) ] - \log \lambda^{-1}}{\log \lambda^{-1}} \,,
		\]
		we have
		\[
		\frac{PV(\alpha)}{V(\alpha)} = \frac{\lambda^{-\alpha + 2}}{(K-\varepsilon)/(1-\varepsilon)} \leq \lambda \,.
		\]
		Thus, (A\hyperref[a1]{1}) is satisfied.
		
		We now show that~\eqref{eq:example-case1-1} holds.
		When $\alpha = 1$,
		$d_{\mbox{\scriptsize{TV}}}(\delta_0 P_{\lambda, K,
			\varepsilon, \beta}^m, \delta_0) = 0$, and
		$d_{\mbox{\scriptsize{TV}}}(\delta_1 P_{\lambda, K,
			\varepsilon, \beta}^m, \delta_0) =
		(1-\varepsilon)^m$ for each~$m$.  Thus,~\eqref{eq:example-case1-1}
		holds.  For the remainder of this proof, assume that
		$\alpha \geq 2$.  It's easy to see that, for each
		positive integer~$m$ and $x \in \X$.
		\[
		d_{\mbox{\scriptsize{TV}}}(\delta_x P_{\lambda, K,
			\varepsilon, \beta}^m, \delta_0) = \mathbb{P}(X_m
		\neq 0|X_0 = x) \,.
		\]
		Suppose that $X_0 \neq 0$.  Each time the chain enters
		$\{1\}$, with probability~$\varepsilon$, it arrives
		at~$0$ in the next iteration, and stays there forever;
		with probability $1-\varepsilon$, it goes to~$\alpha$,
		and then takes exactly $\alpha-1$ steps to get back.
		Therefore, for any positive integer~$k$, the
		probability that the chain does not arrive at~$0$
		within $k\alpha$ iterations is $(1-\varepsilon)^k$.
		It follows that~\eqref{eq:example-case1-1} must hold,
		and~\eqref{eq:example-1} is satisfied.
		
		We now show that
		\begin{equation} \label{eq:example-2}
		\rho^{(N)}_{\mbox{\scriptsize{opt}}} (\lambda, K,
		\varepsilon, \beta) \geq \lambda \,,
		\end{equation}
		thereby completing the proof.  Let $(\lambda,
		\varepsilon, K, \beta) \in T_0$ be arbitrary.  If
		$\lambda = 0$, then~\eqref{eq:example-2} trivially
		holds.  Suppose that $\lambda > 0$, and let $\delta
		\in (0,\lambda)$.  Let $\tilde{P}_{\lambda, K,
			\varepsilon, \beta}$ be the Mtk
		of a Markov chain $\{\tilde{X}_m\}_{m=0}^{\infty}$ on the state space $\tilde{\X} = \{0,1\}$
		that adheres to the following rules:
		\begin{itemize}
			\item If $\tilde{X}_m = 0$, then $\tilde{X}_{m+1} = 0$.
			\item If $\tilde{X}_m = 1$, then with probability $\lambda -
			\delta$, $\tilde{X}_{m+1} = 1$, and with probability $ 1 - \lambda
			+ \delta$, $\tilde{X}_{m+1} = 0$.
		\end{itemize}
		As before, the unique invariant distribution of this chain is the
		point mass at~$0$, and it's easy to show that the chain is reversible
		and non-negative definite.  Let $C = \{0\}$, and define $V: \tilde{\X} \to
		[1,\infty)$ as follows: $V(0) = 1$, $V(1) =
		(1-\lambda+\delta)/\delta$.  It's easy to see that
		(A\hyperref[a1]{1}), (A\hyperref[a2]{2}), and (A\hyperref[a3]{3})
		all hold for $\tilde{P}_{\lambda, K, \varepsilon, \beta}$.  Finally,
		$\rho_*(\tilde{P}_{\lambda, K, \varepsilon, \beta}) = \lambda -
		\delta$, which implies that
		\[
		\rho^{(N)}_{\mbox{\scriptsize{opt}}} (\lambda, K, \varepsilon, \beta)
		\geq \lambda - \delta \,.
		\]
		Since $\delta \in (0,\lambda)$ is arbitrary,~\eqref{eq:example-2}
		holds, and the proof is complete.
	\end{proof}
	
	\begin{remark}
        \label{rem:tight}
		Combining Theorems~\ref{thm:baxbound}
                and~\ref{thm:paraoptima} yields the following
		\[
		\begin{aligned}
		\max\{ \lambda, (1-\varepsilon)^{1/\alpha} \}
                1_{\varepsilon < 1} + \lambda 1_{\varepsilon = 1} &\le
                \rho^{(N)}_{\mbox{\scriptsize{opt}}}(\lambda,K,\varepsilon,\beta)
                \\ & \le \max\{ \lambda, (1-\varepsilon)^{1/\alpha_*}
                \} 1_{\varepsilon < 1} + \lambda 1_{\varepsilon = 1}
                \;,
		\end{aligned}
		\]
		where $\alpha = \lfloor \alpha_* \rfloor$.  Note that
                the expressions on the right and left-hand sides are
                nearly identical.  Consequently, for reversible and
                non-negative definite chains,~\eqref{eq:baxbound} is
                close to optimal as a simple upper bound,
                and~\eqref{eq:paraoptima} is close to tight as a lower
                bound on
                $\rho^{(N)}_{\mbox{\scriptsize{opt}}}(\lambda, K,
                \varepsilon, \beta)$.  It's also worth noting that the
                parameter~$\beta$ does not enter the bounds
                in~\eqref{eq:baxbound} or~\eqref{eq:paraoptima}.  This
                implies that~$\beta$ is not important for chains that
                are reversible and non-negative definite, as far as
                optimal bounds are concerned.
	\end{remark}
	
	While it is certainly of interest to develop good lower bounds on the
	parameter-specific optimal bounds, such bounds do not provide us with
	much information about the effectiveness of drift and minorization for
	a given Markov chain.  Indeed, a single geometrically ergodic Markov
	chain would presumably satisfy (A\hyperref[a1]{1})-(A\hyperref[a3]{3})
	for many different values of $(\lambda, K, \varepsilon, \beta)$.  In
	the next subsection, we deal with this extra layer of complexity.
	
	\subsection{Chain-specific optimal bound}
	\label{ssec:modeloptima}
	
	Consider an Mtk~$P$ on some countably generated state space $(\X, \mathcal{B})$.  In this
	subsection, we investigate the best possible simple upper bound on
	$\rho_*(P)$ that can be obtained from
	(A\hyperref[a1]{1})-(A\hyperref[a3]{3}) and, if applicable, (S\hyperref[s1]{1}) and (S\hyperref[s2]{2}), after the d\&m parameter has been optimized.
	Denote this chain-specific optimal bound by $\rho_{\mbox{\scriptsize{opt}}}^*(P)$.
	In general, there isn't much hope of
	calculating $\rho_{\mbox{\scriptsize{opt}}}^*(P)$ exactly.  In what follows,
	we describe a framework to bound it from below.

	Define $T(P) \subset T_0$ as follows: $(\lambda, K, \varepsilon,
	\beta) \in T(P)$ if $P$ satisfies
	(A\hyperref[a1]{1})-(A\hyperref[a3]{3}) with this value of the d\&m
	parameter.  
	Let
	\[
	\rho_{\mbox{\scriptsize{opt}}}^{*(N)}(P) = \inf_{(\lambda, K, \varepsilon,
		\beta) \in T(P)} \rho^{(N)}_{\mbox{\scriptsize{opt}}}(\lambda, K,
	\varepsilon, \beta) \,.
	\]
	If $T(P) = \emptyset$, then
	$\rho_{\mbox{\scriptsize{opt}}}^{*(N)}(P)$ is set to be unity.
	$\rho_{\mbox{\scriptsize{opt}}}^{*(N)}(P)$ represents
	the best possible simple upper bound on $\rho_*(P)$ that can be
	constructed using (A\hyperref[a1]{1})-(A\hyperref[a3]{3}), assuming that (S\hyperref[s1]{1}) and (S\hyperref[s2]{2}) hold.  
	Hence, if~$P$ is reversible and non-negative definite, then $\rho_{\mbox{\scriptsize{opt}}}^*(P) = \rho_{\mbox{\scriptsize{opt}}}^{*(N)}(P)$.
	If~$P$ does not satisfy both (S\hyperref[s1]{1}) and (S\hyperref[s2]{2}), then any simple upper bound on $\rho_*(P)$ has to be constructed without the help of (S\hyperref[s1]{1}) and/or (S\hyperref[s2]{2}), and cannot be better than $\rho_{\mbox{\scriptsize{opt}}}^{*(N)}(P)$.
	In this case, $\rho_{\mbox{\scriptsize{opt}}}^{*(N)}(P)$ serves as a lower bound on $\rho_{\mbox{\scriptsize{opt}}}^*(P)$.
	(The exact formulas for $\rho_{\mbox{\scriptsize{opt}}}^*(P)$ when (S\hyperref[s1]{1}) and (S\hyperref[s2]{2}) do not both hold are given in Appendix~\ref{app:nonrev}.)

	Let $\hat{\rho}(P)$ be any (nontrivial) simple upper bound on
	$\rho_*(P)$ based on (A\hyperref[a1]{1})-(A\hyperref[a3]{3}) (and, if applicable, (S\hyperref[s1]{1}) and (S\hyperref[s2]{2})).  Then
	\[
	0 \leq \rho_*(P) \leq \rho_{\mbox{\scriptsize{opt}}}^*(P) \leq
	\hat{\rho}(P) \leq 1 \,.
	\]
	The effectiveness of d\&m for
	constructing an upper bound on $\rho_*(P)$ can be quantified by the
	gap between $\rho_*(P)$ and $\rho_{\mbox{\scriptsize{opt}}}^*(P)$.  A
	large gap means that there does not exist a realization of
	(A\hyperref[a1]{1})-(A\hyperref[a3]{3}) that yields a sharp simple
	upper bound on the chain's true convergence rate.  In particular, if
	$\rho_{\mbox{\scriptsize{opt}}}^*(P) \approx 1$, then it's impossible
	to use simple bounds based on (A\hyperref[a1]{1})-(A\hyperref[a3]{3})
	to show that the chain mixes rapidly, even if $\rho_*(P)$ is very
	small.
	
%	Recall that
%	$\rho^{(N)}_{\mbox{\scriptsize{opt}}}(\lambda,K,\varepsilon,\beta) \le
%	\rho^{(R)}_{\mbox{\scriptsize{opt}}}(\lambda,K,\varepsilon,\beta) \le
%	\rho_{\mbox{\scriptsize{opt}}}(\lambda,K,\varepsilon,\beta)$.  
	In the
	previous section, we developed reasonably sharp lower bounds on
	$\rho^{(N)}_{\mbox{\scriptsize{opt}}}(\lambda, K, \varepsilon,
	\beta)$.  Thus, if we could identify $T(P)$, then it would, in
	principle, be straightforward to bound
	$\rho_{\mbox{\scriptsize{opt}}}^*(P)$ from below.  Unfortunately,
	identifying $T(P)$ requires finding all of the values of $(\lambda,
	K)$ for which (A\hyperref[a1]{1}) holds, which is impossible.  Indeed,
	in practice, the only drift conditions that can be established are
	those associated with simple drift functions that lend themselves to
	the analysis of the Markov chain corresponding to $P$.  Put simply,
	for a given $P$, there is a massive difference between the set of
	drift conditions that hold in theory, and the set of drift conditions
	that can actually be established in practice.  In what follows, we
	circumvent this difficulty by constructing a lower bound on
	$\rho^{(N)}_{\mbox{\scriptsize{opt}}}(\lambda, K, \varepsilon, \beta)$
	that does not depend on $(\lambda, K)$.  The construction pivots on a
	lower bound for the size of the small set.  A proof of the following
	result is provided in Appendix~\ref{app:pic-1}.
	
	\begin{theorem} \label{thm:pic-1}
		Suppose that~$P$ defines a $\psi$-irreducible,
		aperiodic Markov chain that satisfies
		(A\hyperref[a1]{1}) and (A\hyperref[a2]{2}) with
		parameter $(\lambda, K, \varepsilon) \in [0,1) \times
		[1,\infty) \times (0,1]$ and small set~$C$.
		Then~$P$ admits a unique stationary
		distribution~$\Pi$, and
		\[
		\Pi(C) \geq \frac{\log \lambda^{-1}}{\log K + \log \lambda^{-1}} \,.
		\]
		The right-hand side is interpreted as~$1$ if $\lambda
		= 0$.
	\end{theorem}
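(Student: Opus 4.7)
The plan is to bound $\Pi(C)$ from below by controlling the mean return time to $C$ via Kac's formula, and then to control that return time using the drift condition (A\hyperref[a1]{1}). Existence and uniqueness of $\Pi$ follow from standard Meyn-Tweedie theory: $\psi$-irreducibility, aperiodicity, the geometric drift (A\hyperref[a1]{1}), and the small-set condition (A\hyperref[a2]{2}) together imply positive Harris recurrence, so $P$ admits a unique invariant probability measure $\Pi$. If $\lambda = 0$, then $PV(x) \le 0$ for $x \notin C$, which is incompatible with $V \ge 1$ unless $C = \X$; in that case $\Pi(C) = 1$, matching the stated convention. Assume $\lambda \in (0,1)$ for the remainder.

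Let $\tau_C = \inf\{n \ge 0 : X_n \in C\}$. The first main step is to show that $M_n := \lambda^{-(n \wedge \tau_C)} V(X_{n \wedge \tau_C})$ is a nonnegative $\mathbb{P}_x$-supermartingale: on $\{\tau_C > n\}$ we have $X_n \notin C$, so (A\hyperref[a1]{1}) gives $\mathbb{E}[V(X_{n+1}) \mid \mathcal{F}_n] \le \lambda V(X_n)$, which translates to $\mathbb{E}[M_{n+1} \mid \mathcal{F}_n] \le M_n$. Using Harris recurrence to secure $\tau_C < \infty$ almost surely, Fatou's lemma together with $V \ge 1$ yields $\mathbb{E}_x[\lambda^{-\tau_C}] \le V(x)$. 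Jensen's inequality applied to the convex map $t \mapsto \lambda^{-t}$ then delivers $\mathbb{E}_x[\tau_C] \le \log V(x) / \log \lambda^{-1}$ for every $x \in \X$.

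Next, for $x \in C$, define $\tau_C^+ = \inf\{n \ge 1 : X_n \in C\}$. Conditioning on the first step and noting that $\mathbb{E}_y[\tau_C] = 0$ when $y \in C$, then applying the concavity of $\log$ inside the integral and (A\hyperref[a1]{1}),
\[
\mathbb{E}_x[\tau_C^+] \;=\; 1 + \int P(x, \df y)\, \mathbb{E}_y[\tau_C] \;\le\; 1 + \frac{\log PV(x)}{\log \lambda^{-1}} \;\le\; \frac{\log K + \log \lambda^{-1}}{\log \lambda^{-1}}.
\]
Kac's formula $\Pi(C)\, \mathbb{E}_{\Pi_C}[\tau_C^+] = 1$, with $\Pi_C$ the normalized restriction of $\Pi$ to $C$, then gives the claimed lower bound on $\Pi(C)$ after integrating the display against $\Pi_C$.

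The main obstacle is the supermartingale step: one must verify that (A\hyperref[a1]{1}) really produces a supermartingale on the pre-$\tau_C$ stretch, justify passage to the limit $n \to \infty$ via Fatou's lemma without losing the inequality, and invoke positive Harris recurrence to ensure $\tau_C < \infty$ almost surely from every starting state. Once those technicalities are cleared, the remainder of the argument is two clean applications of Jensen's inequality plus the classical Kac formula.
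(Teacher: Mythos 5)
Your proof is correct, but it follows a genuinely different route from the paper's. The paper works with the occupation count $N_m = \sum_{i=0}^{m-1} 1_C(X_i)$: it builds the supermartingale $Z_j = (\lambda^{-1}K)^{-N_j-1}\lambda^{-j}V(X_j)$, bounds $\mathbb{P}(N_m < i)$ by Markov's inequality, derives a linear-in-$m$ lower bound on $\mathbb{E}N_m$, and finishes with the ergodic theorem ($N_m/m \to \Pi(C)$ a.s.) plus dominated convergence. You instead work with hitting times: the supermartingale $\lambda^{-(n\wedge\tau_C)}V(X_{n\wedge\tau_C})$ gives $\mathbb{E}_x[\lambda^{-\tau_C}] \le V(x)$, two applications of Jensen (convexity of $t\mapsto\lambda^{-t}$, then concavity of $\log$ against $P(x,\cdot)$ combined with $PV(x)\le K$ on $C$) give $\mathbb{E}_x[\tau_C^+] \le (\log K + \log\lambda^{-1})/\log\lambda^{-1}$ for $x\in C$, and Kac's formula converts this into the stated bound on $\Pi(C)$. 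The two arguments are dual in a precise sense --- your Jensen steps produce the same ratio that the paper extracts by choosing $i_m = \lfloor m\log\lambda^{-1}/(\log\lambda^{-1}+\log K)\rfloor$ --- and both yield exactly the same constant. Your version is shorter and avoids the ergodic theorem, at the cost of invoking the general-state-space Kac formula (e.g.\ Meyn and Tweedie, Theorem 10.4.9 with $B=\X$), which is valid here. Two small points you should make explicit: (i) $C\neq\emptyset$, and (ii) $\Pi(C)>0$, without which $\Pi_C$ is undefined and Kac's formula does not apply. Both follow from your own first step: since $V$ is finite and $\mathbb{E}_x[\lambda^{-\tau_C}]\le V(x)$ with $\lambda>0$, we get $\tau_C<\infty$ $\mathbb{P}_x$-a.s.\ for every $x$, so $C$ is accessible from everywhere, hence $\psi(C)>0$ and $\Pi(C)>0$ by $\psi$-irreducibility (this also means you do not actually need to invoke Harris recurrence to secure $\tau_C<\infty$; Fatou applied to the supermartingale already forces it, since $\liminf_n \lambda^{-(n\wedge\tau_C)} = \infty$ on $\{\tau_C=\infty\}$).
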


	Combining Theorems~\ref{thm:paraoptima} and~\ref{thm:pic-1}
	yields the following result.
	
	\begin{corollary}
		\label{cor:modeloptima}
		Let~$P$ be a Mtk that satisfies
		(A\hyperref[a1]{1})-(A\hyperref[a3]{3}) with d\&m parameter
		$(\lambda, K, \varepsilon, \beta) \in T_0$ and small set $C \in
		\mathcal{B}$.  Then~$P$ admits a unique stationary
		distribution~$\Pi$ such that $\Pi(C) > 0$, and
		\[
		\rho^{(N)}_{\mbox{\scriptsize{opt}}}(\lambda, K, \varepsilon, \beta)
		\geq (1-\varepsilon)^{\lfloor 1/\Pi(C) \rfloor^{-1}} \,.
		\]
	\end{corollary}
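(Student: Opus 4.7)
The plan is to chain together Theorem~\ref{thm:paraoptima} and Theorem~\ref{thm:pic-1}. Theorem~\ref{thm:paraoptima} supplies the lower bound $\rho^{(N)}_{\mbox{\scriptsize{opt}}}(\lambda,K,\varepsilon,\beta) \geq (1-\varepsilon)^{1/\alpha}$ (when $\varepsilon<1$) in terms of the integer $\alpha = \lfloor \alpha_* \rfloor$, while Theorem~\ref{thm:pic-1} controls $\Pi(C)$ from below in terms of $(\lambda,K)$. The key thing to show is that $\alpha \geq \lfloor 1/\Pi(C) \rfloor$; since $1-\varepsilon<1$, this then flips into the inequality $(1-\varepsilon)^{1/\alpha} \geq (1-\varepsilon)^{\lfloor 1/\Pi(C)\rfloor^{-1}}$, which is exactly what is required.

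First I would dispatch the boundary cases. The claim is vacuous when $\varepsilon = 1$, since the right-hand side is~$0$. Before invoking Theorem~\ref{thm:pic-1} I also need to justify $\psi$-irreducibility and aperiodicity, which follow from (A\hyperref[a2]{2}) together with (A\hyperref[a3]{3}): the minorization on~$C$, combined with $\nu(C)\geq\beta>0$, yields a one-step return to $C$ with positive probability from any point of $C$, and then the drift condition pulls the rest of the space back into $C$. This already gives existence and uniqueness of $\Pi$ and $\Pi(C)>0$.

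The main calculation is then the comparison $\alpha_* \geq 1/\Pi(C)$. When $\lambda = 0$, Theorem~\ref{thm:pic-1} gives $\Pi(C) = 1$ and $\alpha_* = 1$ by convention, so both sides of the target inequality reduce to $1-\varepsilon$. For $\lambda \in (0,1)$, Theorem~\ref{thm:pic-1} rearranges to
\[
\frac{1}{\Pi(C)} \leq 1 + \frac{\log K}{\log \lambda^{-1}}\,,
\]
whereas
\[
\alpha_* = 1 + \frac{\log\big[(K-\varepsilon)/(1-\varepsilon)\big]}{\log \lambda^{-1}}\,.
\]
A one-line check shows that $(K-\varepsilon)/(1-\varepsilon) \geq K$ for all $K\geq 1$ and $\varepsilon \in (0,1)$, so $\alpha_* \geq 1/\Pi(C)$, and monotonicity of $\lfloor\cdot\rfloor$ gives $\alpha \geq \lfloor 1/\Pi(C)\rfloor$. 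Combining with Theorem~\ref{thm:paraoptima} finishes the argument.

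There is no real obstacle here beyond bookkeeping; the only thing to be careful about is that the exponent inequality reverses because the base $1-\varepsilon$ is less than~$1$, and that the $\lambda=0$ edge case must be handled via the stated convention $\alpha_*=1$ so that the two theorems align consistently.
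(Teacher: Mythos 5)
Your proof is correct and follows essentially the same route as the paper: both hinge on the inequality $(K-\varepsilon)/(1-\varepsilon)\geq K$ to compare $\alpha$ with $\lfloor 1/\Pi(C)\rfloor$ via Theorem~\ref{thm:pic-1}, and then invoke Theorem~\ref{thm:paraoptima}. The only cosmetic difference is that the paper obtains $\psi$-irreducibility and aperiodicity by citing geometric ergodicity from Theorem~\ref{thm:baxbound}, whereas you sketch them directly from the d\&m conditions; both are fine.
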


	\begin{proof}
		By Theorem~\ref{thm:baxbound},
		(A\hyperref[a1]{1})-(A\hyperref[a3]{3}) implies the existence of a unique stationary distribution~$\Pi$ as well as geometric ergodicity.
		Geometric ergodicity implies aperiodicity and $\psi$-irreducibility.
		By
		Theorem~\ref{thm:pic-1}, $\Pi(C) > 0$.  
		Therefore,
		$(1-\varepsilon)^{\lfloor 1/\Pi(C) \rfloor^{-1}} = 0$ when
		$\varepsilon = 1$.  It suffices to consider the case that
		$\varepsilon < 1$.  Fix $\varepsilon < 1$.  Since $K \geq 1$,
		$(K-\varepsilon)/(1-\varepsilon) \geq K$.  Hence,
		\[
		\alpha = \left\lfloor \frac{\log [(K-\varepsilon) / (1-\varepsilon) ]
			+ \log \lambda^{-1}}{\log \lambda^{-1}} \right\rfloor \geq
		\left\lfloor \frac{\log K + \log \lambda^{-1}}{\log \lambda^{-1}}
		\right\rfloor \,.
		\]
		By Theorem~\ref{thm:pic-1}, $1/\alpha \leq \lfloor 1/\Pi(C)
		\rfloor^{-1}$.  The result then follows from
		Theorem~\ref{thm:paraoptima}.
	\end{proof}
	\noindent Here is the main result of this section.
	\begin{theorem}
		\label{thm:modeloptima1}
		Let $P$ be a Mtk such that $T(P) \neq \emptyset$, and let~$\Pi$
		denote the stationary distribution.  Then
		\[
		\rho_{\mbox{\scriptsize{opt}}}^*(P) \geq \inf_{C \in \mathcal{B}:
			\Pi(C) > 0} \left[ (1-\varepsilon_C)^{\lfloor 1/\Pi(C) \rfloor^{-1}}
		\right] \,,
		\]
                where $\varepsilon_C$ is defined at
                \eqref{eq:varepsilonC}.
	\end{theorem}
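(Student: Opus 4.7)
The plan is to chain together Corollary~\ref{cor:modeloptima} with the definition of $\varepsilon_C$ and the inequality $\rho_{\mbox{\scriptsize{opt}}}^*(P) \ge \rho_{\mbox{\scriptsize{opt}}}^{*(N)}(P)$ that was established in the discussion preceding the theorem. The observation driving the argument is that the bound in Corollary~\ref{cor:modeloptima} depends on $(\lambda, K)$ only through the occurrence of a witnessing small set $C$, and $C$ in turn depends on $P$ rather than on the drift parameters. This suggests that after bounding $\rho_{\mbox{\scriptsize{opt}}}^{(N)}(\lambda, K, \varepsilon, \beta)$ in terms of $\varepsilon$ and $\Pi(C)$, one can pass to a quantity that no longer references $(\lambda, K, \varepsilon, \beta)$ at all, thereby trivialising the infimum over $T(P)$.

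First I would recall that, irrespective of whether $P$ satisfies (S\hyperref[s1]{1}) and (S\hyperref[s2]{2}), the discussion preceding the theorem gives $\rho_{\mbox{\scriptsize{opt}}}^*(P) \ge \rho_{\mbox{\scriptsize{opt}}}^{*(N)}(P) = \inf_{(\lambda,K,\varepsilon,\beta) \in T(P)} \rho_{\mbox{\scriptsize{opt}}}^{(N)}(\lambda, K, \varepsilon, \beta)$. Fixing an arbitrary $(\lambda, K, \varepsilon, \beta) \in T(P)$, I would then pick any small set $C$ (together with a witnessing $V$ and $\nu$) realising (A\hyperref[a1]{1})-(A\hyperref[a3]{3}) for $P$ at this value of the parameter. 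Corollary~\ref{cor:modeloptima} delivers both $\Pi(C) > 0$ and
\[
\rho^{(N)}_{\mbox{\scriptsize{opt}}}(\lambda, K, \varepsilon, \beta) \ge (1-\varepsilon)^{\lfloor 1/\Pi(C) \rfloor^{-1}} \,.
\]
Because (A\hyperref[a2]{2}) holds for $P$ and $C$ with this $\varepsilon$ and $\nu$, the definition~\eqref{eq:varepsilonC} forces $\varepsilon \le \varepsilon_C$, so monotonicity of $t \mapsto (1-t)^{\lfloor 1/\Pi(C) \rfloor^{-1}}$ on $[0,1]$ yields $(1-\varepsilon)^{\lfloor 1/\Pi(C) \rfloor^{-1}} \ge (1-\varepsilon_C)^{\lfloor 1/\Pi(C) \rfloor^{-1}}$. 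Bounding this further by the infimum over all $D \in \mathcal{B}$ with $\Pi(D) > 0$ produces a quantity that no longer depends on the chosen tuple, so taking the infimum over $T(P)$ on the left concludes the argument.

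There is essentially no deep obstacle: Corollary~\ref{cor:modeloptima} has already done the real work by reducing $\rho^{(N)}_{\mbox{\scriptsize{opt}}}(\lambda, K, \varepsilon, \beta)$ to an expression in $\varepsilon$ and $\Pi(C)$ only, and replacing $\varepsilon$ by $\varepsilon_C$ is immediate from the definition of the latter. The only points worth checking are the edge cases. When $\varepsilon_C = 1$ the bound $(1-\varepsilon_C)^{\lfloor 1/\Pi(C) \rfloor^{-1}}$ equals $0$ and the corresponding inequality is trivial; when (A\hyperref[a2]{2}) fails on a given $C$, the paper's convention $\varepsilon_C = 0$ makes the corresponding entry in the infimum equal to $1$, so such $C$ are harmless. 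Finally, the assumption $T(P) \ne \emptyset$ guarantees the existence of at least one witnessing $C$ with $\Pi(C) > 0$, so the infimum on the right-hand side is taken over a non-empty collection and the conclusion holds.
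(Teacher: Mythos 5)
Your proposal is correct and follows essentially the same route as the paper: fix a tuple in $T(P)$, extract a witnessing small set, apply Corollary~\ref{cor:modeloptima}, replace $\varepsilon$ by $\varepsilon_C$ via the definition~\eqref{eq:varepsilonC}, and pass to the infimum over sets of positive $\Pi$-measure before taking the infimum over $T(P)$. You merely make explicit the step $\varepsilon \le \varepsilon_C$ that the paper's chain of inequalities leaves implicit.
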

	
	\begin{proof}
		For each $(\lambda, K, \varepsilon, \beta) \in T(P)$, there exists
		$C' \in \mathcal{B}$ such that (A\hyperref[a1]{1}) to
		(A\hyperref[a3]{3}) hold with parameter $(\lambda, K, \varepsilon,
		\beta)$ and small set~$C'$.  By Corollary~\ref{cor:modeloptima},
		$\Pi(C') > 0$, and
		\[
		\rho^{(N)}_{\mbox{\scriptsize{opt}}}(\lambda, K, \varepsilon, \beta)
		\geq (1-\varepsilon)^{\lfloor 1/\Pi(C') \rfloor^{-1}} \geq \inf_{C \in
			\mathcal{B}: \Pi(C) > 0} \left[ (1-\varepsilon_C)^{\lfloor 1/\Pi(C)
			\rfloor^{-1}} \right] \,.
		\]
		Recall that 
		\[
		\rho_{\mbox{\scriptsize{opt}}}^*(P) \ge \rho_{\mbox{\scriptsize{opt}}}^{*(N)}(P) := \inf_{(\lambda, K,
			\varepsilon, \beta) \in T(P)}
		\rho^{(N)}_{\mbox{\scriptsize{opt}}}(\lambda, K, \varepsilon, \beta) \,.
		\]
		The then result follows.
	\end{proof}
	
	Any simple upper bound on $\rho_*(P)$ based on
        (A\hyperref[a1]{1})-(A\hyperref[a3]{3}) cannot be smaller than
        $\rho_{\mbox{\scriptsize{opt}}}^*(P)$.  Of course, we would
        like $\rho_{\mbox{\scriptsize{opt}}}^*(P)$ to be as far away
        from~$1$ as possible.  Theorem~\ref{thm:modeloptima1} shows
        that $\rho_{\mbox{\scriptsize{opt}}}^*(P)$ is far away from
        unity only if one can find a set~$C$ such that $\varepsilon_C$
        and $\Pi(C)$ are both large.  But, as mentioned in the
        Introduction, $\varepsilon_C$ tends to decrease with the size
        of~$C$, while $\Pi(C)$ tends to increase with it.  As we shall
        see in the next subsection, when dimension is high and $\Pi$
        tends to ``spread out,'' it may be impossible to find a $C$
        that does the job, even when $\rho_*(P)$ is not close to~$1$.
	
	\subsection{Examples}
	\label{ssec:example-1}
	
	\subsubsection{Gaussian autoregressive chain}
	\label{sssec:gaussian1}
	
	Let us now consider a generalization of the Gaussian autoregressive
	example from the Introduction.  Let $\X = \mathbb{R}^n$, where $n$ is
	a positive integer, and define $P_n$ by
	\[
	P_n(x, \df y) \propto \exp \left( -\frac{2}{3} \left\| y -
	\frac{x}{2} \right\|^2 \right) \df y \,, \quad x \in
	\mathbb{R}^n \,.
	\]
	The corresponding Markov chain is reversible, non-negative definite,
	and has a unique stationary distribution~$\Pi_n$, which is the
	$n$-dimensional standard Gaussian distribution.  The chain is
	geometrically ergodic, and it's well-known that, regardless of~$n$,
	$\rho_*(P_n) = 0.5$.  In the Introduction, we established
	(A\hyperref[a1]{1})-(A\hyperref[a3]{3}) for $P_{10}$ using a quadratic
	drift function, and then applied Theorem~\ref{thm:baxbound}, which
	yielded an upper bound of 0.99993 for $\rho_*(P_{10})$.  We will use the results we have developed to demonstrate that the problem here is not a poorly
	chosen drift function, loose d\&m inequalities, etc.; it is, in fact,
	the inadequacy of the d\&m method itself.
	
	For a given positive integer~$n$, let $C_n \subset \mathbb{R}^n$ be
	measurable, and let $D_n$ be the diameter of $C_n$, i.e., $\sup \{
	\|x-y\|: x,y \in C_n \}$.  Let $B_{D_n/2}$ be the ball of
	diameter~$D_n$ that is centered at the origin, and let $f_n(\cdot)$ be
	the probability density function of a $\chi_n^2$ random variable,
	i.e., $f_n(x) \propto x^{(n-2)/2} e^{-x/2} 1_{(0,\infty)}(x)$.  Then
	\[
	\Pi_n(C_n) \leq \int_{B_{D_n/2}} \Pi_n(\df x) = \frac{1}{\alpha_n(D_n)}  \,,
	\]
	where 
	\[
	\alpha_n(D) = \left(\int_0^{D^2/4} f_n(x) \, \df x \right)^{-1} > 1
	\]
	for $D > 0$.
	
	Next, we construct an upper bound on $\varepsilon_{C_n}$.
	Let $p_n(x,y)$ be the density function of $P_n(x,\df y)$ with respect to the Lebesgue measure.  
	If (A\hyperref[a2]{2}) does not hold for $P_n$ and $C_n$, then $\varepsilon_{C_n} = 0$ by definition.
	Suppose that (A\hyperref[a2]{2}) holds for
	$P_n$ and $C_n$ with some $\varepsilon_n > 0$ and probability measure
	$\nu_n$, i.e., for each $x \in C_n$ and measurable $A \subset
	\mathbb{R}^n$,
	\begin{equation} \label{eq:gaussianminor}
	P_n(x,A) \geq \varepsilon_n \nu_n(A) \,.
	\end{equation}
	Then $\nu_n(\cdot)$ is absolutely continuous with respect to
	$P_n(x,\cdot)$ for each $x \in C_n$.  This implies that
	$\nu_n$ admits a measurable density.  It follows
	from~\eqref{eq:gaussianminor} that, for each $x \in C_n$ and
	almost every $y \in \mathbb{R}^n$,
	\[
	p_n(x,y) \geq \varepsilon_n \nu_n(\df y)/\df y \,.
	\]
	Hence, for each $x,x' \in C_n$,
	\[
	\int_{\mathbb{R}^n} \min \{ p_n(x,y), p_n(x',y) \} \, \df y
	\geq \int_{\mathbb{R}^n} \varepsilon_n \nu_n(\df y) \,.
	\]
	Therefore,
	\[
	\begin{aligned}
		\varepsilon_{C_n} 
		&\leq \inf_{x,x' \in C_n} \int_{\mathbb{R}^n} \min \{ p_n(x,y),
		p_n(x',y) \} \, \df y \\
		&= \inf_{x,x' \in C_n} \int_{\mathbb{R}^n} \left[ p_n(x,y) - |p_n(x,y) - p_n(x',y)| \right] \df y \,.
	\end{aligned}
	\]
	By standard results on the total variation distance between Gaussians \citep[see, e.g.,][page 5]{devroye2018total},
	\[
	\int_{\mathbb{R}_n} |p_n(x,y) - p_n(x',y)| \, \df y = 1 - 2\Phi\left( - \frac{\|x-x'\|}{2\sqrt{3}} \right),
	\]
	where $\Phi(\cdot)$ is the (cumulative) distribution function of the
	one-dimensional standard Gaussian distribution.  
	Thus,
	\begin{equation} \label{eq:gaussian-tv}
	\varepsilon_{C_n} \leq 2 \Phi\left(-\frac{D_n}{2\sqrt{3}} \right) \,.
	\end{equation}
	Therefore,
	Theorem~\ref{thm:modeloptima1} yields the following:
	\begin{equation} \label{ine:gaussianoptima}
	\rho_{\mbox{\scriptsize{opt}}}^*(P_n) \geq \inf_{D > 0} \left[ 1 -
	2\Phi\left( -\frac{D}{2\sqrt{3}} \right) \right]^{1/\lfloor
		\alpha_n(D) \rfloor} =: \rho_n^* \,.
	\end{equation}
	No simple upper bound on $\rho_*(P_n)$ based on
	(A\hyperref[a1]{1})-(A\hyperref[a3]{3}) can be less than $\rho_n^*$
	(even if it exploits the reversibility and non-negative definiteness
	of $P_n$).  For comparison with the analysis in the Introduction, we
	note that $\rho_{10}^* \approx 0.922$, which is much larger than the
	true convergence rate, $\rho_*(P_{10}) = 0.5$.  Thus, a simple bound
	based on (A\hyperref[a1]{1})-(A\hyperref[a3]{3}) cannot produce a
	tight bound on $\rho_*(P_{10})$.  Unfortunately, things only get worse
	as $n \rightarrow \infty$.  A proof of the following result can be
	found in Appendix~\ref{app:gaussian}.
	
	\begin{proposition}
		\label{pro:gaussian}
		$\rho_{\mbox{\scriptsize{opt}}}^*(P_n) \to 1$ as $n \to \infty$.
	\end{proposition}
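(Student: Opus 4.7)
The plan is to analyze $g_n(D) := a(D)^{1/\lfloor \alpha_n(D) \rfloor}$, where $a(D) := 1 - 2\Phi(-D/(2\sqrt{3}))$ and $\alpha_n(D) = 1/F_n(D^2/4)$, with $F_n$ the CDF of $\chi_n^2$. Observe that $a(\cdot)$ does not depend on $n$: it is increasing from $0$ to $1$. Meanwhile, $\alpha_n(D)$ is decreasing in $D$, and for each fixed $D$, $\alpha_n(D) \to \infty$ as $n \to \infty$ because the $\chi_n^2$ distribution concentrates around its mean $n$. These observations suggest that $g_n(D)$ should approach $1$ uniformly in $D$, which is precisely what I aim to prove.

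Fix $\eta \in (0,1)$; the goal is to show $g_n(D) \geq 1 - \eta$ for every $D > 0$ once $n$ is sufficiently large. Set $D_\eta := -2\sqrt{3}\,\Phi^{-1}(\eta/2)$, so that $a(D_\eta) = 1 - \eta$. In the ``large $D$'' regime $D \geq D_\eta$, since $a(D) \in (0,1)$ and $1/\lfloor \alpha_n(D) \rfloor \leq 1$, we have $g_n(D) \geq a(D) \geq 1 - \eta$ for free. The real work is on $(0, D_\eta)$, where $a(D)$ can be arbitrarily small.

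For $D \in (0, D_\eta)$, after taking logs, $g_n(D) \geq 1 - \eta$ is implied by $F_n(D^2/4)\,\log(1/a(D)) \leq \eta/2$ (using $\lfloor \alpha_n(D) \rfloor \geq \alpha_n(D)/2$, which holds once $F_n(D_\eta^2/4) \leq 1/2$, together with $\log(1/(1-\eta)) \geq \eta$). To establish this uniformly I plan to combine two ingredients: (i) since $a'(0) > 0$, there exists $c_\eta > 0$ with $a(D) \geq c_\eta D$ on $(0, D_\eta]$, giving $\log(1/a(D)) \leq \log(1/D) + \log(1/c_\eta)$; and (ii) from $f_n(x) \leq x^{(n-2)/2}/(2^{n/2} \Gamma(n/2))$ one obtains $F_n(D^2/4) \leq D^n/(n \cdot 2^{(3n-2)/2}\,\Gamma(n/2))$. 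Putting these together,
\[
F_n(D^2/4)\,\log(1/a(D)) \leq \frac{D^n\,[\log(1/D) + \log(1/c_\eta)]}{n \cdot 2^{(3n-2)/2}\,\Gamma(n/2)}.
\]
The supremum of $D^n \log(1/D)$ over $(0, D_\eta)$ is bounded uniformly in $n$ (the maximum is either at the interior point $D = e^{-1/n}$ with value $1/(en)$, or at $D = D_\eta$), while $\Gamma(n/2)$ grows super-exponentially. Hence the right-hand side vanishes as $n \to \infty$, and the sufficient condition holds uniformly on $(0, D_\eta)$ for $n$ large.

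The main obstacle is that the naive bound $F_n(D^2/4) \leq F_n(D_\eta^2/4)$ (which does vanish by concentration of $\chi_n^2$) is insufficient by itself, because the factor $\log(1/a(D))$ blows up like $\log(1/D)$ as $D \to 0$. The resolution is to exploit the much stronger polynomial-in-$D$ decay of $F_n$ near the origin, which is fast enough to absorb this logarithmic singularity and give a uniform vanishing bound on $(0, D_\eta)$.
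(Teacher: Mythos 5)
Your proof is correct and follows essentially the same route as the paper's: the same split at a threshold $D_\eta$ (the paper's $M_\delta$), the same linear lower bound $a(D)\geq c_\eta D$ on the Gaussian overlap near the origin, and the same key observation that $F_n(D^2/4)$ decays polynomially in $D$ with an $n$-dependent coefficient tending to $0$, which absorbs the logarithmic singularity of $\log(1/a(D))$. The only differences are cosmetic: you bound $F_n(D^2/4)$ by integrating the crude density bound $f_n(x)\leq x^{n/2-1}/(2^{n/2}\Gamma(n/2))$ and then work on the log scale, whereas the paper bounds $F_n(D^2/4)\leq f_n(M_\delta^2/4)D^2/4$ via monotonicity of $f_n$ below its mode and then explicitly minimizes $(A_\delta D)^{B_\delta D^2}$.
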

	
	Proposition~\ref{pro:gaussian} shows that
	(A\hyperref[a1]{1})-(A\hyperref[a3]{3}) are completely inadequate for
	obtaining sharp upper bounds on $\rho_*(P_n)$ when~$n$ is large.  No
	matter what drift function and small set are used, and regardless of
	how the (simple) convergence bound is formed, this inadequacy
	persists.
	
	\subsubsection{Metropolis adjusted Langevin algorithm}
	\label{sssec:mala}
	
	Again, let $\X = \mathbb{R}^n$ for some positive integer~$n$.
	Let $\pi_n: \X \to [0,\infty)$ be a differentiable probability
	density function, and let $f_n(x) = -\log \pi_n(x) ,\, x \in
	\X$.  Denote the corresponding probability measure by
	$\Pi_n$.  The Metropolis adjusted Langevin algorithm (MALA)
	is an MCMC algorithm that can be used to draw random vectors that
	are approximately distributed as $\Pi_n$.  It is carried out
	by simulating a Markov chain $\{X_m\}_{m=0}^{\infty}$ with
	the following two-step transition rule.
	\begin{enumerate}
		\item {\it Proposal Step.} Given $X_m = x \in \X$, draw $y \in \X$ from the normal distribution $\mbox{N}_n(x - h_n \nabla f_n(x), 2h_n I_n)$, where $h_n > 0$ is the step size.
		\item {\it Metropolis Step.} Let
		\[
		a_n(x,y) = \min \left\{ 1, \frac{\pi_n(y) \exp [ -\|x-y+h_n \nabla f_n(y) \|^2/(4h_n) ]  }{\pi_n(x) \exp[ -\| y - x + h_n \nabla f_n(x) \|^2 /(4h_n) ] } \right\} \,.
		\]
		With probability $a_n(x,y)$, accept the proposal, and set $X_{m+1} = y$; with probability $1-a_n(x,y)$, reject the proposal, and set $X_{m+1} = x$.
	\end{enumerate}
	$\{X_m\}$ is reversible with respect to $\Pi_n$, but it's unknown if the chain is non-negative definite.
	Its transition kernel is given by
	\begin{equation} \label{eq:malakernel}
	\begin{aligned}
	P_n(x,A) =& \int_{\mathbb{R}^n} (4\pi h_n)^{-n/2} \exp \left( -\frac{1}{4h_n} \|y-x+h_n\nabla f_n(x)\|^2 \right) \times \\
	&\qquad\qquad \{ a_n(x,y)1_A(y) + [1-a_n(x,y)] 1_A(x) \} \df y \,.
	\end{aligned}
	\end{equation}
	We will study whether one can find a sharp upper bound on $\rho_*(P_n)$ via a drift and minorization argument based on (A\hyperref[a1]{1})-(A\hyperref[a3]{3}), particularly for large~$n$.
	
	The magnitude of the step size~$h_n$ plays an important role
	in the convergence of MALA.  \citepage{roberts1998optimal}{Section~2} argued
	that, if $\pi_n$ corresponds to independent and identically
	distributed random components, and the chain starts from
	stationarity, then one should set~$h_n$ to be of order
	$n^{-1/3}$.  However, when the chain does not start from
	$\Pi_n$, a step size of order $n^{-1/2}$ is more appropriate
	\citep[][Section~5]{christensen2005scaling}.  Step-sizes of similar order
	are also recommended in more recent studies \citep[see,
	e.g.,][]{dwivedi2018log}.
	
	While no concrete results have been established regarding the
	behavior of $\rho_*(P_n)$ as $n \to \infty$, recent results
	suggest that when~$h_n$ is chosen appropriately, $\rho_*(P_n)$
	does not tend to~$1$ rapidly as $n \to \infty$ \citep[see,
	e.g.,][Section~3]{dwivedi2018log}.  That is, it appears to be the
	case that MALA converges reasonably fast in high-dimensional
	settings.  Indeed, without the Metropolis step, the algorithm
	is just an Euler discretization of the Langevin diffusion,
	which is a stochastic process defined by the stochastic
	differential equation
	\[
	\df L_{n,t} = -\nabla f_n(L_{n,t}) \,\df t + \sqrt{2} \,\df W_{n,t} \,,
	\]
	where $\{W_{n,t}\}_{t \geq 0}$ is the standard Brownian motion on $\X =
	\mathbb{R}^n$.  Suppose that $f_n$ is strongly convex with
	parameter $\ell > 0$, i.e., for each $x,y \in \X$,
	\[
	f_n(x) - f_n(y) - \nabla f_n(y)^T (x-y) \geq \frac{\ell}{2} \|x-y\|^2 \,.
	\]
	Then under regularity conditions, the total variation distance
	between the distribution of $L_{n,t}$ and $\Pi_n$ is bounded
	above by a function of~$t$ which decays at a geometric rate of
	$e^{-\ell/2}$ \citep[see, e.g.,][Lemma~1]{dalalyan2017theoretical}.
	It seems reasonable to believe that, when~$h_n$ is small and
	the Metropolis step rarely results in a rejection,
	$\rho_*(P_n)^{1/h_n}$ is comparable to $e^{-\ell/2}$.
	When~$\ell$ is independent of~$n$, and~$h_n$ is of order
	$n^{-\gamma}$ for some constant $\gamma > 0$, this would
	suggest that $ \limsup_{n \to \infty} \rho_*(P_n)^{n^{\gamma}}
	\in [0,1) $.  If this is true, then
	\begin{equation} \nonumber
	\liminf_{n \to \infty} n^{\gamma} [1-\rho_*(P_n)] > 0 \,.
	\end{equation}
	That is, as $n \to \infty$, $\rho_*(P_n)$ goes to~$1$ at a
	polynomial (or slower) rate.  This is, of course, just
	conjecture.  One possible approach to proving this assertion
	is to establish, for each~$n$, a set of single-step drift and
	minorization conditions, such as (A\hyperref[a1]{1})-(A\hyperref[a3]{3}), and then to use these conditions to
	construct a quantitative upper bound on $\rho_*(P_n)$.  The
	assertion is proved if this upper bound converges to~$1$ at a
	polynomial rate as $n \to \infty$.  In what follows, we show
	that this argument is unlikely to work.

	For simplicity, we consider the case where $\pi_n$ satisfies
	the following conditions.
	\begin{enumerate}
		\item [(H1)] $\pi_n$ corresponds to independent
		identically distributed random variables. \label{h1}
		That is, there exists a probability density function
		$g: \mathbb{R} \to [0,\infty)$ independent of~$n$
		such that, for each $n \geq 1$ and $x = (x_1,
		\dots, x_n) \in \mathbb{R}^n$, $\pi_n(x) =
		\prod_{i=1}^{n} g(x_i)$.  Moreover, $\sup_{x \in
			\mathbb{R}} g(x) < \infty$.
		\item [(H2)] 
		There exists a constant~$M$ such that, for each $x_1,y_1 \in \mathbb{R}$,
		\[
		\left| -\frac{\df \log g(x_1)}{\df x_1} + \frac{\df \log g(y_1)}{\df y_1} \right| \leq M |x_1 - y_1| \,.
		\] \label{h2}
	\end{enumerate}
	\vspace{-.5cm}
	(H\hyperref[h1]{1}) and (H\hyperref[h2]{2}) imply that $f_n$ is $M$-smooth, i.e., for each $x,y \in \X$,
	\[
	\|\nabla f_n(x) - \nabla f_n(y) \| \leq M\|x-y\| \,.
	\]
	
	The following result is proved in Appendix~\ref{app:mala}.
	\begin{proposition} \label{pro:mala}
		Suppose that (H\hyperref[h1]{1}) and
		(H\hyperref[h2]{2}) are satisfied, and that $h_n <
		n^{-\gamma}$ for some $\gamma > 0$.  Then, for any
		$\gamma' > 0$,
		\[
		\lim_{n \to \infty} n^{\gamma'}
		[1-\rho_{\mbox{\scriptsize{opt}}}^*(P_n)]
		= 0 \,.
		\]
	\end{proposition}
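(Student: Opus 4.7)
The plan is to mimic the analysis of the Gaussian autoregressive example from Subsection~\ref{sssec:gaussian1}. By Theorem~\ref{thm:modeloptima1},
\[
1 - \rho_{\mbox{\scriptsize{opt}}}^*(P_n) \leq \sup_{C \in \mathcal{B}: \Pi_n(C) > 0} \left[1 - (1 - \varepsilon_C)^{\lfloor 1/\Pi_n(C)\rfloor^{-1}}\right],
\]
and the goal is to show this supremum is $o(n^{-\gamma'})$ for every $\gamma' > 0$. Using $\lfloor 1/p\rfloor^{-1} \leq 2p$ for $p \in (0, 1]$ together with the two elementary bounds $1 - (1-x)^a \leq x$ (from the convexity of $f(x) = 1 - (1-x)^a$ for $a \in (0, 1]$) and $1 - (1-x)^a \leq -a \ln(1 - x)$ (from $e^y \geq 1 + y$), each summand is at most $\min\{\varepsilon_C,\, 2\Pi_n(C)[-\ln(1 - \varepsilon_C)]\}$. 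I would then bound both $\Pi_n(C)$ and $\varepsilon_C$ in terms of the diameter $D(C) := \sup_{x, x' \in C} \|x - x'\|$ of $C$, and dichotomize at a threshold $D_0 := \sqrt{h_n n}$.

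For $\Pi_n(C)$: assumption (H\hyperref[h1]{1}) gives $\pi_n(x) \leq B^n$ with $B = \sup_{x \in \mathbb{R}} g(x) < \infty$, and $C$ is contained in a ball of radius $D(C)$, so Stirling's bound on the unit-ball volume yields
\[
\Pi_n(C) \leq B^n V_n D(C)^n \leq (2\pi e B^2 D(C)^2/n)^{n/2}.
\]
For $\varepsilon_C$: I would adapt the Gaussian-example argument to MALA. Writing the kernel~(\ref{eq:malakernel}) as an absolutely continuous part with density $q_n(x, y) a_n(x, y)$ plus a rejection atom at $x$, where $q_n(x, \cdot) = \mbox{N}_n(x - h_n \nabla f_n(x), 2 h_n I_n)$, one observes that any $C$ with $\Pi_n(C) > 0$ contains at least two distinct points (since $\Pi_n$ has a density), whose rejection atoms are mutually singular and therefore drop out of the common part. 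Combined with $a_n \leq 1$, the closed-form TV distance between equal-covariance Gaussians, and the $M$-smoothness of $f_n$ implied by (H\hyperref[h1]{1}) and (H\hyperref[h2]{2}),
\[
\varepsilon_C \leq 2 \Phi\!\left(-\frac{(1 - h_n M) D(C)}{2\sqrt{2 h_n}}\right),
\]
and a standard Mills-ratio lower bound then delivers $-\ln(1 - \varepsilon_C) = O\bigl(-\ln D(C) + D(C)^2/h_n + \ln(1/h_n)\bigr)$.

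For $D(C) \leq D_0$, a direct calculation shows $2\Pi_n(C)[-\ln(1 - \varepsilon_C)]$ is super-exponentially small: the logarithmic piece is controlled by $\sup_D D^n(-\ln D) = e^{-1}/n$ (attained at $D = e^{-1/n}$), and the remaining pieces inherit a $(2\pi e B^2 h_n)^{n/2}$ factor once $D$ is capped at $D_0$, which is super-exponentially small since $h_n \leq n^{-\gamma} \to 0$. For $D(C) > D_0$, I would instead invoke the cleaner bound $1 - (1 - \varepsilon_C)^a \leq \varepsilon_C \leq 2 \Phi\bigl(-\sqrt{n}(1 + o(1))/(2\sqrt{2})\bigr)$, which is itself super-exponentially small in $n$. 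Combining the two regimes yields the claim. The main obstacle is the small-$D$ regime in which $\varepsilon_C \to 1$ forces $-\ln(1 - \varepsilon_C)$ to diverge; the super-exponential decay of the $(2\pi e B^2/n)^{n/2}$ prefactor supplied by the volume bound is exactly what tames this divergence.
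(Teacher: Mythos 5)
Your argument is correct and follows the paper's proof in all essential respects: both apply Theorem~\ref{thm:modeloptima1}, bound $\Pi_n(C)$ by a power of the diameter using $\sup_x g(x) < \infty$, and bound $\varepsilon_C$ by the overlap of the Gaussian proposal densities after observing that the rejection atoms at distinct points are mutually singular and invoking the $M$-smoothness of $f_n$. The only divergence is the elementary endgame of optimizing over the diameter $D$: the paper splits at $D = 1/(2G)$ and uses $\lfloor (GD)^{-n}\rfloor \geq (2^{n-1}-1)(GD)^{-1}$ together with the positivity of $\inf_{x>0}[1-2\Phi(-x/4)]^{Gx}$, whereas you split at $\sqrt{h_n n}$ and use the log-linearization $1-(1-x)^a \leq -a\ln(1-x)$ with a Mills-ratio bound and a sharper ball-volume estimate; both are valid.
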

	
	Proposition~\ref{pro:mala} shows that, under
	(H\hyperref[h1]{1}) and (H\hyperref[h2]{2}), it is not
	possible to construct a simple bound based on
	(A\hyperref[a1]{1})-(A\hyperref[a3]{3}) (and perhaps
	reversibility) that tends to~$1$ at a polynomial rate.
	
	\section{\pcite{rosenthal1995minorization} Drift and Minorization Conditions}
	\label{sec:coupling}
	
	In this section, we develop analogues of the results in
        Section~\ref{sec:limitation} for convergence rate bounds based
        on \pcite{rosenthal1995minorization} d\&m conditions, which
        take the following form:
	\begin{enumerate}
		\item[(B1)] There exist $\eta \in [0,1)$, $L \in
		[0,\infty)$, and measurable $V: \X \to [0,\infty)$
		such that
		\[
		PV(x) \leq \eta V(x) + L 
		\]
		for each $x \in \X$. \label{b1}
		\item[(B2)] There exist $\varepsilon \in (0,1]$, $d >
		0$, and a probability measure $\nu: \mathcal{B}
		\to [0,1]$ such that
		\[
		P(x, A) \geq \varepsilon \nu(A)
		\]
		for each $x \in C$ and $A \in \mathcal{B}$, where $C =
		\{x \in \X: V(x) \leq d \}$. \label{b2}
		\item[(B3)] The following holds: $d >
		2L/(1-\eta)$. \label{b3}
	\end{enumerate}
	
	The main difference between
        (B\hyperref[b1]{1})-(B\hyperref[b3]{3}) and
        (A\hyperref[a1]{1})-(A\hyperref[a3]{3}) is that the former
        puts a restriction on the structure of $C$.  Indeed, in
        (B\hyperref[b2]{2}),~$C$ is assumed to be a level set of the
        drift function.  Assume that
        (B\hyperref[b1]{1})-(B\hyperref[b3]{3}) hold.  The size of~$C$
        is controlled by the parameter~$d$, which is bounded below by
        $2L/(1-\eta)$.  The lower bound on~$d$ originates from the
        coupling argument, and it has some interesting implications.
        One of these is that $\Pi(C) > 1/2$
        \citep[][Proposition~2.16]{jerison2016drift}, which, as we
        shall see, turns out to be very important.  The following
        result provides a recipe for converting
        (B\hyperref[b1]{1})-(B\hyperref[b3]{3}) into a convergence
        rate bound.
	
	\begin{theorem}{[\cite{rosenthal1995minorization}, Theorem 12]}
		\label{thm:rosenbound}
		Let~$P$ be a Mtk on $(\X, \mathcal{B})$.  Suppose that
                (B\hyperref[b1]{1})-(B\hyperref[b3]{3}) hold.
                Then~$P$ admits a unique stationary distribution,
                $\Pi$, and
		\begin{equation}
		\label{eq:rosbound}
		\rho_*(P) \leq (1-\varepsilon)^{1/\alpha_{**}}
                1_{\varepsilon<1} + \tilde{\lambda} 1_{\varepsilon =
                  1} \,,
		\end{equation}
		where
		\[
		\alpha_{**} = \frac{\log [\tilde{K}/(1-\varepsilon)] +
                  \log \tilde{\lambda}^{-1} }{\log
                  \tilde{\lambda}^{-1}} \,,
		\]
		$\tilde{\lambda} = (1+2L+\eta d)/(1+d)$ and $\tilde{K}
                = 1 + 2\eta d + 2L$.
	\end{theorem}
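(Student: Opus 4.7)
The plan is to follow the coupling argument of \citet{rosenthal1995minorization} and then bound the coupling-time tail using a renewal-style argument. Existence and uniqueness of $\Pi$ follow from standard results: (B\hyperref[b1]{1}) and (B\hyperref[b2]{2}), together with implicit $\psi$-irreducibility and aperiodicity, already give geometric ergodicity in the sense of Meyn and Tweedie, hence a unique stationary distribution. (Equivalently, (B\hyperref[b1]{1})--(B\hyperref[b3]{3}) imply (A\hyperref[a1]{1})--(A\hyperref[a3]{3}) on the same chain after replacing $V$ by $V+1$ and taking $C = \{V \leq d\}$, so Theorem~\ref{thm:baxbound} can be invoked.)

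For the quantitative bound I would set up the standard coupling on $\X \times \X$. Run two copies $(X_m, Y_m)$ of the chain, evolving them independently whenever $(X_m, Y_m) \notin R := C \times C$; whenever $(X_m, Y_m) \in R$, with probability $\varepsilon$ draw a common $Z \sim \nu$ and set $X_{m+1} = Y_{m+1} = Z$, otherwise draw $X_{m+1}$ and $Y_{m+1}$ independently from the residual kernels $[P(x,\cdot) - \varepsilon \nu]/(1-\varepsilon)$. Let $T$ be the first coupling time. The coupling inequality gives $d_{\text{TV}}(\delta_x P^m, \delta_y P^m) \leq \mathbb{P}(T > m)$, and integrating against $\Pi(\df y)$ bounds $d_{\text{TV}}(\delta_x P^m, \Pi)$.

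Next I would derive a joint drift condition using $\tilde V(x,y) = 1 + V(x) + V(y)$. Because the coupling preserves the two marginals, the one-step expectation of $\tilde V$ under the coupling kernel $\bar P$ equals $1 + PV(x) + PV(y)$, which by (B\hyperref[b1]{1}) is at most $1 + 2L + \eta[V(x) + V(y)]$. The map $t \mapsto (1 + 2L + \eta t)/(1 + t)$ is decreasing in $t \geq 0$ (since $\eta < 1 \leq 1 + 2L$), so on $R^c$, where $V(x) + V(y) > d$, we get $\bar P \tilde V(x,y) \leq \tilde \lambda \, \tilde V(x,y)$ with $\tilde\lambda = (1 + 2L + \eta d)/(1 + d) < 1$ (strict inequality precisely by (B\hyperref[b3]{3})); on $R$, $V(x) + V(y) \leq 2d$ yields $\bar P \tilde V(x,y) \leq \tilde K = 1 + 2\eta d + 2L$. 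Thus the joint chain drifts geometrically toward $R$ with parameters $(\tilde\lambda, \tilde K)$, while each visit to $R$ affords an independent Bernoulli$(\varepsilon)$ coupling attempt.

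Finally, $\mathbb{P}(T > m)$ is bounded by conditioning on the number of visits $N_m$ to $R$ by time $m$: non-coupling has conditional probability at most $(1-\varepsilon)^{N_m}$, and the joint drift bounds the tail of the excursion lengths between returns to $R$. Balancing the two competing tails --- demanding many returns (to make $(1-\varepsilon)^{N_m}$ small) against the probability that the joint chain actually realizes that many returns --- reproduces exactly the Baxendale-style calculation underlying \eqref{eq:baxbound}, but with $(\tilde\lambda, \tilde K, \varepsilon)$ in place of $(\lambda, K, \varepsilon)$, yielding the exponent $\alpha_{**}$ and the geometric rate $(1-\varepsilon)^{1/\alpha_{**}}$. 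The case $\varepsilon = 1$ is degenerate: coupling succeeds on first entry to $R$, so $T$ equals the hitting time of $R$ and its tail is governed by $\tilde\lambda$ alone. The main technical obstacle is the bookkeeping in this last step: partitioning $\{T > m\}$ according to the timing of $R$-visits, bounding excursion-length moments via the drift, and extracting precisely the exponent $\alpha_{**}$ that balances the two competing tails --- most cleanly done via the renewal-theoretic machinery of \citet{baxendale2005renewal} applied to the split extension of the joint chain.
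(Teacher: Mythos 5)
This theorem is imported verbatim from Rosenthal (1995, Theorem 12); the paper offers no proof of its own beyond the remark that the stated bound results from optimizing Rosenthal's free parameter. Your sketch --- the $\varepsilon$-coupling on $C\times C$, the bivariate drift for $1+V(x)+V(y)$ with exactly the parameters $(\tilde\lambda,\tilde K)$, and the balancing of $(1-\varepsilon)^{rm}$ against the drift-controlled tail to extract $\alpha_{**}$ --- is precisely that source's argument, so it is correct and essentially the same approach.
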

	
	\begin{remark}
		In \cite{rosenthal1995minorization}, the upper bound
                on $\rho_*(P)$ depends upon an additional free
                parameter.  The bound presented here is the result of
                optimizing with respect to the free parameter.
	\end{remark}
	
	\begin{remark}
		In general, coupling arguments often produce
                convergence bounds that are simpler than those based
                on renewal theory.  While \eqref{eq:rosbound} is
                actually a bit more complex than~\eqref{eq:baxbound},
                note that~\eqref{eq:baxbound} is valid only when the
                Markov chain is reversible and non-negative definite,
                whereas \eqref{eq:rosbound} holds without this extra
                regularity.  Moreover, the bounds that
                \citet{baxendale2005renewal} developed for chains that
                do not satisfy the extra regularity are much more
                complex than~\eqref{eq:rosbound}.
	\end{remark}
	
	\begin{remark}
		\citepage{rosenthal1995minorization}{Theorem~5} also
                provides a version of Theorem~\ref{thm:rosenbound}
                based on multi-step minorization, that is,
                (B\hyperref[b2]{2}) with~$P$ replaced by $P^m$ for $m
                \geq 1$.
	\end{remark}

	\subsection{Optimal bounds} \label{ssec:coupling}
	
	We will mimic what was done in Section~\ref{ssec:paraoptima} with
	(B\hyperref[b1]{1})-(B\hyperref[b3]{3}) in place of
	(A\hyperref[a1]{1})-(A\hyperref[a3]{3}).  
	In this context, a ``simple'' upper bound
	based on (B\hyperref[b1]{1})-(B\hyperref[b3]{3}) is one that depends
	on the d\&m parameter $(\eta, L, \varepsilon, d)$, but not on $V$ or
	$\nu$. 
	For fixed $(\eta, L,
	\varepsilon, d) \in \tilde{T}_0 := [0,1) \times [0,\infty) \times
	(0,1] \times [0,\infty)$, define $S_{\eta,L,\varepsilon,d}$ to be
	the collection of Mtks that satisfy
	(B\hyperref[b1]{1})-(B\hyperref[b3]{3}) with d\&m parameter $(\eta,
	L, \varepsilon, d)$.   
	Let
	\[
	\rho_{\mbox{\scriptsize{opt}}}(\eta,L,\varepsilon,d) = \sup_{P \in
		S_{\eta,L,\varepsilon,d}} \rho_*(P) \;.
	\]
	If $S_{\eta,L,\varepsilon,d} = \emptyset$, $\rho_{\mbox{\scriptsize{opt}}}(\eta,L,\varepsilon,d)$ is set to be~$0$.
	$\rho_{\mbox{\scriptsize{opt}}}(\eta,L,\varepsilon,d)$ is the smallest simple upper bound that can be constructed based on (B\hyperref[b1]{1})-(B\hyperref[b3]{3}) with the given d\&m parameter value.
%	Since conditions like (S\hyperref[s1]{1}) and (S\hyperref[s2]{2}) are almost never used in coupling-based bounds, we will not define $\rho_{\mbox{\scriptsize{opt}}}$ specifically for reversible and non-negative definite chains like in Section~\ref{ssec:paraoptima}.
	
	\begin{proposition}
		\label{pro:paraoptima-B}
		For each $(\eta, L, \varepsilon, d) \in \tilde{T}_0$ such that $d > 2L/(1-\eta)$, we have
		\[
		\rho_{\mbox{\scriptsize{opt}}}(\eta, L, \varepsilon, d)
		\geq 1-\varepsilon \;.
		\]
	\end{proposition}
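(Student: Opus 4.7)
The plan is to prove the lower bound by exhibiting an explicit Markov chain that lies in $S_{\eta, L, \varepsilon, d}$ and has convergence rate equal to $1-\varepsilon$. This parallels the strategy used in Theorem~\ref{thm:paraoptima}, but the construction should be substantially simpler here, because the target bound $1-\varepsilon$ does not involve the drift parameters $\eta$ or $L$, and because (B\hyperref[b1]{1}) only requires $V:\X \to [0,\infty)$ rather than $V:\X \to [1,\infty)$ as in (A\hyperref[a1]{1}).

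First I would consider the two-state chain on $\X = \{0,1\}$ defined by $P(0,\{0\}) = 1$, $P(1,\{0\}) = \varepsilon$, and $P(1,\{1\}) = 1-\varepsilon$. The unique stationary distribution is $\delta_0$. Starting from state $1$, the probability that the chain has not yet been absorbed at $0$ by time $m$ is $(1-\varepsilon)^m$, so $d_{\mbox{\scriptsize{TV}}}(\delta_1 P^m, \delta_0) = (1-\varepsilon)^m$, from which one reads off $\rho_*(P) = 1-\varepsilon$ (with the understanding that this value is $0$ when $\varepsilon = 1$, which is consistent with the claim).

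Next I would verify membership in $S_{\eta,L,\varepsilon,d}$ by taking the trivial drift function $V \equiv 0$. With this choice, (B\hyperref[b1]{1}) reduces to $0 \leq L$, which holds since $L \geq 0$. The small set is $C = \{x : V(x) \leq d\} = \{0,1\}$, and (B\hyperref[b2]{2}) with $\nu = \delta_0$ becomes $P(x,A) \geq \varepsilon\,\delta_0(A)$ for $x \in \{0,1\}$, which is immediate from the definition of $P$ since $P(0,\{0\}) = 1 \geq \varepsilon$ and $P(1,\{0\}) = \varepsilon$. Finally, (B\hyperref[b3]{3}) is part of the hypothesis. Hence $P \in S_{\eta,L,\varepsilon,d}$, and the conclusion $\rho_{\mbox{\scriptsize{opt}}}(\eta,L,\varepsilon,d) \geq \rho_*(P) = 1-\varepsilon$ follows from the definition of the supremum.

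There is essentially no hard step; the slack in (B\hyperref[b1]{1}) that allows $V \equiv 0$ is precisely what trivializes the construction. The conceptual content, however, is significant: a single two-state chain already attains the rate $1-\varepsilon$ within the parameter class, showing that no simple bound derived from (B\hyperref[b1]{1})-(B\hyperref[b3]{3}) can improve upon $1-\varepsilon$, regardless of the values of $\eta$, $L$, and $d$.
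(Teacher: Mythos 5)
Your proof is correct and follows essentially the same route as the paper: the same two-state absorbing chain with the trivial drift function $V \equiv 0$ and $\nu = \delta_0$, so that $C = \X$ and (B\hyperref[b1]{1})--(B\hyperref[b3]{3}) hold trivially while $\rho_*(P) = 1-\varepsilon$. Your write-up simply spells out the verification of each condition in slightly more detail than the paper does.
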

	
	\begin{proof}
		Let $(\eta, L, \varepsilon, d)$ be fixed, and let $P_{\eta, L,
			\varepsilon, d}$ be the Mtk of a Markov chain on the state space
		$\{0,1\}$ that adheres to the following rules:
		\begin{itemize}
			\item If $X_m = 0$, then $X_{m+1} = 0$.
			\item If $X_m = 1$, then $X_{m+1} = 0$ with probability
			$\varepsilon$, and $X_{m+1} = 1$ otherwise.
		\end{itemize}
%		It's clear that this chain is reversible and non-negative definite.
		Let $V(0) = V(1) = 0$.  Then (B\hyperref[b1]{1})-(B\hyperref[b3]{3})
		all hold (with $C=\X$).  It follows that $P_{\eta, L, \varepsilon, d}
		\in S_{\eta,L,\varepsilon,d}$, and
		\[
		\rho_{\mbox{\scriptsize{opt}}}(\eta, L, \varepsilon, d ) \geq
		\rho_*(P_{\eta, L, \varepsilon, d}) = 1-\varepsilon \,.
		\]
	\end{proof}
	
	The lower bound on $\rho_{\mbox{\scriptsize{opt}}}$ given in
	Proposition~\ref{pro:paraoptima-B} is obviously very crude.
	Nevertheless, as we shall see, it's enough to produce several
	interesting results.  We now turn our attention to chain-specific
	optimal bounds.
	
	For a Mtk~$P$ on $(\X, \mathcal{B})$, define $\tilde{T}(P) \subset \tilde{T}_0$ as follows: $(\eta, L,
	\varepsilon, d) \in \tilde{T}(P)$ if $P$ satisfies
	(B\hyperref[b1]{1})-(B\hyperref[b3]{3}) with this value of the d\&m
	parameter.  Define the chain-specific optimal upper bound on $\rho_*(P)$
	as
	\[
	\rho_{\mbox{\scriptsize{opt}}}^\dagger(P) = \inf_{(\eta, L,
		\varepsilon, d) \in \tilde{T}(P)} \rho_{\mbox{\scriptsize{opt}}}(\eta, L,
	\varepsilon, d) \,.
	\]
	As usual, if $\tilde{T}(P) = \emptyset$, then $\rho_{\mbox{\scriptsize{opt}}}^\dagger(P)$ is set to be~$1$.
	
	To get a handle on $\rho_{\mbox{\scriptsize{opt}}}^\dagger(P)$, we study the size of the small set~$C$.
	The following result can be found in Chapter~2 of \cite{jerison2016drift}.
	\begin{proposition} \label{pro:jerison} \citep[][]{jerison2016drift}
		Suppose that (B\hyperref[b1]{1})-(B\hyperref[b3]{3}) hold.
		Then $\Pi(C) > 1/2$.
	\end{proposition}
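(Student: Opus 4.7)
The plan is to combine the drift condition (B\hyperref[b1]{1}) with the stationarity of $\Pi$ to obtain the moment bound $\int V \, \df\Pi \le L/(1-\eta)$, and then apply Markov's inequality together with the strict inequality $d > 2L/(1-\eta)$ from (B\hyperref[b3]{3}) to conclude that $\Pi(C) > 1/2$.

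First I would establish the moment bound. The naive manipulation $\int V \, \df\Pi = \int PV \, \df\Pi \le \eta \int V \, \df\Pi + L$ delivers the bound instantly, but it is circular unless we already know $\int V \, \df\Pi < \infty$. To sidestep this, I would iterate (B\hyperref[b1]{1}) to obtain $P^m V(x) \le \eta^m V(x) + L/(1-\eta)$ for every $m \ge 1$ and $x \in \X$. Theorem~\ref{thm:rosenbound} has already shown that $P$ admits a unique stationary distribution $\Pi$ under (B\hyperref[b1]{1})--(B\hyperref[b3]{3}), and the convergence rate bound supplied there is strictly less than $1$; by the paper's definition of $\rho_*$, this means $d_{\mbox{\scriptsize{TV}}}(\delta_x P^m, \Pi) \to 0$ for each $x \in \X$. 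Fixing any $x_0 \in \X$ (so $V(x_0) < \infty$, since $V$ takes values in $[0,\infty)$) and any truncation level $N$, total variation convergence applied to the bounded function $V \wedge N$ gives
\[
\int (V \wedge N) \, \df\Pi = \lim_{m \to \infty} P^m(V \wedge N)(x_0) \le \liminf_{m \to \infty} P^m V(x_0) \le \frac{L}{1-\eta}.
\]
Monotone convergence in $N$ then yields $\int V \, \df\Pi \le L/(1-\eta)$.

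The rest is immediate. Markov's inequality gives $\Pi(V \ge d) \le d^{-1} \int V \, \df\Pi \le L/[d(1-\eta)]$, and combining with (B\hyperref[b3]{3}) produces
\[
\Pi(V > d) \le \Pi(V \ge d) \le \frac{L}{d(1-\eta)} < \frac{1}{2},
\]
whence $\Pi(C) = \Pi(\{V \le d\}) = 1 - \Pi(V > d) > 1/2$.

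The main obstacle will be the integrability step: without the truncation device, there is no way to rule out $\int V \, \df\Pi = \infty$ from (B\hyperref[b1]{1}) alone, and the stationarity identity is vacuous in that case. My plan circumvents this by using the geometric ergodicity already established in Theorem~\ref{thm:rosenbound} to transfer the uniform bound on $P^m V(x_0)$ at one fixed point over to a bound on the $\Pi$-integral of $V$. Once the moment bound is in hand, Markov's inequality combined with the strict gap $d - 2L/(1-\eta) > 0$ supplied by (B\hyperref[b3]{3}) delivers the strict inequality $\Pi(C) > 1/2$.
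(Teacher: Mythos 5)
Your proof is correct and follows essentially the same route as the paper: bound $\int_{\X} V \, \df\Pi$ by $L/(1-\eta)$ using stationarity and (B\hyperref[b1]{1}), then apply Markov's inequality together with (B\hyperref[b3]{3}). The only difference is that the paper justifies the integrability of $V$ under $\Pi$ by citing Proposition~4.24 of \cite{hairer2006ergodic}, whereas you supply a self-contained truncation and monotone-convergence argument based on the geometric ergodicity from Theorem~\ref{thm:rosenbound}; both are valid.
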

	\begin{proof}
		Since $\Pi$ is stationary, it follows from (B\hyperref[b1]{1}) that
		\[
		\Pi V :=  \int_{\X} V(x) \Pi(\df x) \leq \eta \Pi V + L \,,
		\]
		which implies that $\Pi V \leq L/(1-\eta)$ \citep[Proposition
		4.24]{hairer2006ergodic}.  On the other hand, since $V(x) >
		d$ for $x \in \X \setminus C$, $\Pi V \geq d(1-\Pi(C))$.
		Combining the upper and lower bounds on $\Pi V$ yields
		\[
		1 - \Pi(C) \leq \frac{L}{d(1-\eta)} \,.
		\]
		Hence, by (B\hyperref[b3]{3}), $\Pi(C) > 1/2$.
	\end{proof}
	\begin{remark} \label{rem:jerison}
		As argued in \cite{jerison2016drift}, the bound $\Pi(C)>1/2$ holds much more generally.
		Indeed, in coupling arguments, drift conditions like (B\hyperref[b1]{1}) are usually used in conjunction with regularity conditions like (B\hyperref[b3]{3}) to establish bivariate drift conditions of the following form.
		\begin{enumerate}
			\item [(C1)] There
			exist $\lambda' < 1$, $K' \in [1,\infty)$, $C' \in \mathcal{B}$, and measurable function $V_1: \X \to
			[1/2,\infty)$ such that
			\begin{equation} \nonumber
			PV_1(x) + P V_1(y) \leq \lambda' [V_1(x) + V_1(y)]
			1_{(\X \times \X) \setminus (C' \times C')}(x,y) + K'
			1_{C'\times C'}(x,y)
			\end{equation}
			for each $x,y \in \X$. \label{c1}
		\end{enumerate}
		This type of bivariate drift condition is then used to bound the time it takes for two coupled copies of the Markov chain defined by~$P$ to enter the set~$C'$ simultaneously.
		For instance, to prove Theorem~\ref{thm:rosenbound}, \citet{rosenthal1995minorization} derived (C\hyperref[c1]{1}) for $V_1(x) = V(x) + 1/2$ and $C' = C$.
		(C\hyperref[c1]{1}) is also crucial for deriving d\&m-based bounds in \cite{roberts1999bounds} and \cite{roberts2004general}.
		It turns out that, whenever (C\hyperref[c1]{1}) holds, one must have $\Pi(C') > 1/2$.
		This is essentially established in \cite{jerison2016drift}.
		We provide a more general result in Appendix~\ref{app:jerison}.
	\end{remark}
	
	\begin{remark}
		The bound $\Pi(C) > 1/2$ is also closely related to the aperiodicity of the
		Markov chain defined by~$P$ \citep[see,
		e.g.,][page 47]{roberts2004general}.  Indeed, one can easily show
		that, when $\Pi(C) > 1/2$ and the minorization condition (B\hyperref[b2]{2}) holds, the
		chain must be aperiodic.
	\end{remark}
	
	Proposition~\ref{pro:jerison} shows that, under (B\hyperref[b1]{1})-(B\hyperref[b3]{3}), the size of~$C$ cannot be too small.
	This typically leads to a restriction on the minorization parameter~$\varepsilon$, which in turn puts a bound on $\rho_{\mbox{\scriptsize{opt}}}^\dagger(P)$.
	The next result, which is the analogue of
	Theorem~\ref{thm:modeloptima1}, is an immediate consequence of
	Proposition~\ref{pro:paraoptima-B} in conjunction with Proposition~\ref{pro:jerison}.
	\begin{theorem}
		\label{thm:modeloptima2}
		Let $P$ be a Mtk such that $\tilde{T}(P) \neq \emptyset$, and
		let~$\Pi$ denote the stationary distribution.  Then
		\[
		\rho_{\mbox{\scriptsize{opt}}}^\dagger(P) \geq \inf_{C \in
			\mathcal{B}, \Pi(C) > 1/2} (1-\varepsilon_C) \,,
		\]
		where $\varepsilon_C$ is defined at \eqref{eq:varepsilonC}.
	\end{theorem}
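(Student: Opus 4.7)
The plan is to deduce the inequality by combining Proposition~\ref{pro:paraoptima-B} with Proposition~\ref{pro:jerison}. Since $\rho_{\mbox{\scriptsize{opt}}}^\dagger(P)$ is defined as the infimum of $\rho_{\mbox{\scriptsize{opt}}}(\eta, L, \varepsilon, d)$ over $(\eta, L, \varepsilon, d) \in \tilde{T}(P)$, it suffices to bound this integrand below by $\inf_{C \in \mathcal{B},\, \Pi(C) > 1/2}(1 - \varepsilon_C)$ uniformly for every admissible parameter value, and then pass to the infimum.

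First I would fix an arbitrary $(\eta, L, \varepsilon, d) \in \tilde{T}(P)$. By the definition of $\tilde{T}(P)$, the kernel $P$ satisfies (B\hyperref[b1]{1})--(B\hyperref[b3]{3}) for this parameter value, witnessed by some drift function $V$ and minorization measure $\nu$; the associated small set is $C := \{x \in \X : V(x) \leq d\}$. Proposition~\ref{pro:jerison} gives $\Pi(C) > 1/2$, so $C$ is an admissible set in the infimum on the right-hand side of the claimed bound. Moreover, because (B\hyperref[b2]{2}) holds for $P$ and $C$ with minorization constant $\varepsilon$---a condition of the same form as (A\hyperref[a2]{2})---the definition of $\varepsilon_C$ at \eqref{eq:varepsilonC} as a supremum over admissible minorization constants yields $\varepsilon_C \geq \varepsilon$, hence $1 - \varepsilon \geq 1 - \varepsilon_C$. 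Finally, since (B\hyperref[b3]{3}) is precisely the hypothesis $d > 2L/(1-\eta)$ of Proposition~\ref{pro:paraoptima-B}, that proposition applies, and chaining the inequalities gives
\[
\rho_{\mbox{\scriptsize{opt}}}(\eta, L, \varepsilon, d) \;\geq\; 1 - \varepsilon \;\geq\; 1 - \varepsilon_C \;\geq\; \inf_{C' \in \mathcal{B},\, \Pi(C') > 1/2} (1 - \varepsilon_{C'}) \,.
\]
Taking the infimum of the left-hand side over $(\eta, L, \varepsilon, d) \in \tilde{T}(P)$ produces $\rho_{\mbox{\scriptsize{opt}}}^\dagger(P)$ and closes the argument.

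There is essentially no obstacle here: once Propositions~\ref{pro:paraoptima-B} and~\ref{pro:jerison} are in hand, the proof is a direct unpacking of the definitions of $\tilde{T}(P)$, $\rho_{\mbox{\scriptsize{opt}}}^\dagger(P)$, and $\varepsilon_C$. The one point to be attentive to is the direction of the inequality $\varepsilon_C \geq \varepsilon$, which holds automatically because $\varepsilon_C$ is a supremum. What is structurally interesting is that the infimum is restricted to sets with $\Pi(C') > 1/2$, rather than $\Pi(C') > 0$ as in Theorem~\ref{thm:modeloptima1}; this reflects the additional constraint that Rosenthal-style d\&m forces on the small set through the coupling-motivated inequality $d > 2L/(1-\eta)$.
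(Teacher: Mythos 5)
Your proof is correct and follows exactly the route the paper indicates: the paper presents Theorem~\ref{thm:modeloptima2} as an immediate consequence of Proposition~\ref{pro:paraoptima-B} combined with Proposition~\ref{pro:jerison}, and your argument is precisely the unpacking of that claim, including the key observation that $\varepsilon_C \geq \varepsilon$ by the supremum definition at \eqref{eq:varepsilonC}. Nothing is missing.
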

	
	Theorem~\ref{thm:modeloptima2} suggests that
	$\rho_{\mbox{\scriptsize{opt}}}^\dagger(P)$ is far away from~$1$ only
	if there exists a set~$C$ such that $\Pi(C)>1/2$ and $\varepsilon_C$
	is large.  As we shall see in the next subsection, when the state
	space is high-dimensional (or finite but large), such a~$C$ will
	often not exist, even when the associated chain mixes rapidly.
	
	\subsection{Examples}
	\label{ssec:example-2}
	
	\subsubsection{Gaussian autoregressive chain}
	
	Here we take one final look at the Gaussian autoregressive chain,
	which has Mtk given by
	\[
	P_n(x, \df y) \propto \exp \left( -\frac{2}{3} \left\| y -
	\frac{x}{2} \right\|^2 \right) \df y \,, \quad x \in
	\mathbb{R}^n \,.
	\]
	It is shown in \citepage{qin2018wasserstein}{Proposition~4} that the convergence rate
	bound in Theorem~\ref{thm:rosenbound}, which is based on
	(B\hyperref[b1]{1})-(B\hyperref[b3]{3}), converges to~$1$ as $n \to
	\infty$.  The following result, which is the analogue of
	Proposition~\ref{pro:gaussian}, shows that, in fact, the optimal bound
	also converges to 1.
	
	\begin{proposition}
		$\rho_{\mbox{\scriptsize{opt}}}^\dagger(P_n) \to 1$ as $n \to
		\infty$.
	\end{proposition}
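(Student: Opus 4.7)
The plan is to apply Theorem~\ref{thm:modeloptima2} and show that the right-hand side in that theorem tends to $1$. This reduces the task to proving
\[
\inf_{C \in \mathcal{B}:\, \Pi_n(C) > 1/2} (1 - \varepsilon_C) \to 1 \mbox{ as } n \to \infty,
\]
where $\varepsilon_C$ is as in \eqref{eq:varepsilonC} for $P_n$. For this, I will reuse the key estimate from Subsection~\ref{sssec:gaussian1}: the derivation leading to \eqref{eq:gaussian-tv} shows that any measurable $C \subset \mathbb{R}^n$ of Euclidean diameter $D$ satisfies $\varepsilon_C \leq 2\Phi(-D/(2\sqrt{3}))$. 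Hence it suffices to show that every $C$ with $\Pi_n(C) > 1/2$ must have diameter at least $D_n^*$, where $D_n^* \to \infty$.

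The main step is a geometric-probabilistic observation. Fix any $x_0 \in C$; if $C$ has diameter $D$, then $C \subset B(x_0, D)$, the closed Euclidean ball of radius $D$ around $x_0$. I would then invoke Anderson's inequality: for a centrally symmetric log-concave density $f$ on $\mathbb{R}^n$ and a centrally symmetric convex set $K$, the quantity $x \mapsto \int_{K + x} f$ is maximized at $x = 0$. Applied to the standard Gaussian density and $K = B(0,D)$, this yields $\Pi_n(B(x_0, D)) \leq \Pi_n(B(0, D))$. Consequently, $\Pi_n(C) > 1/2$ forces $\Pi_n(B(0, D)) > 1/2$, i.e., $D^2$ exceeds the median of the $\chi_n^2$ distribution. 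Using $\mathbb{E}[\chi_n^2] = n$ and $\mathrm{Var}(\chi_n^2) = 2n$ with an elementary Chebyshev-type estimate, this median is at least $n/2$ for all sufficiently large $n$, so $D \geq \sqrt{n/2}$.

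Combining the two pieces, for all $n$ large, every feasible $C$ satisfies $\varepsilon_C \leq 2\Phi(-\sqrt{n/2}/(2\sqrt{3}))$, which tends to $0$ as $n \to \infty$. Therefore $\inf_{C:\, \Pi_n(C) > 1/2} (1 - \varepsilon_C) \to 1$, and Theorem~\ref{thm:modeloptima2} delivers $\rho_{\mbox{\scriptsize{opt}}}^\dagger(P_n) \to 1$. The only nontrivial step is the appeal to Anderson's inequality to dominate the Gaussian measure of a translated ball by that of the centered one; everything else reduces to standard concentration of the chi-squared distribution. An alternative, more hands-on route would exploit the fact that $\Pi_n$ concentrates on a thin annulus of radius $\sqrt{n}$ (since $\|Z\|/\sqrt{n} \to 1$ in probability), which would likewise imply that any set carrying more than half the Gaussian mass must have diameter of order $\sqrt{n}$.
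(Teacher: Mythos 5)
Your proof is correct and follows essentially the same route as the paper's: apply Theorem~\ref{thm:modeloptima2}, reuse the estimate $\varepsilon_C \le 2\Phi\left(-D/(2\sqrt{3})\right)$ from \eqref{eq:gaussian-tv}, and show that $\Pi_n(C)>1/2$ forces the diameter of $C$ to grow like $\sqrt{n}$ via the median of the $\chi^2_n$ distribution. The only difference is in the diameter step: the paper asserts that a set of diameter $D$ has $\Pi_n$-measure at most that of the origin-centered ball of radius $D/2$, whereas you enclose $C$ in a radius-$D$ ball about a point of $C$ and recenter via Anderson's inequality --- a fully justified, if slightly lossier, version of the same step that changes nothing in the limit.
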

	
	\begin{proof}
		For a given positive integer~$n$, let $C_n \subset \mathbb{R}^n$ be
		measurable, and let $D_n$ be its diameter.  Suppose that $\Pi_n(C_n) >
		1/2$.  Then $D_n^2/4$ must be larger than the median of the $\chi^2_n$
		distribution, which we denote by $m_n$.  Let $p_n(x,y)$ be the density
		function of $P_n(x,\df y)$.  
		By~\eqref{eq:gaussian-tv} in Section~\ref{sssec:gaussian1},
		\[
		\varepsilon_{C_n} \leq 2 \Phi\left(-\frac{D_n}{2\sqrt{3}} \right) \leq 2
		\Phi\left(-\sqrt{\frac{m_n}{3}} \right).
		\]
		By
		Theorem~\ref{thm:modeloptima2},
		\[
		\rho_{\mbox{\scriptsize{opt}}}^\dagger(P_n) \geq 1 - 2
		\Phi\left(-\sqrt{\frac{m_n}{3}} \right) \,.
		\]
		Since $m_n \to \infty$ as $n \to \infty$, the result follows
		immediately.
	\end{proof}
	
	\subsubsection{Metropolis adjusted Langevin algorithm}
	
	Consider again the MALA algorithm described in
	Section~\ref{sssec:mala}, and, as before, denote its Mtk and step size
	by $P_n$ and $h_n$, respectively.  The following analogue of
	Proposition~\ref{pro:mala}, whose proof is given in
	Appendix~\ref{app:mala2}, shows that any simple upper bound on
	$\rho_*(P_n)$ based on (B\hyperref[b1]{1})-(B\hyperref[b3]{3})
	converges to~$1$ at a faster than polynomial rate.
	\begin{proposition}
		\label{pro:mala2}
		Suppose that (H\hyperref[h1]{1}) and (H\hyperref[h2]{2}) are
		satisfied, and that $h_n < n^{-\gamma}$ for some $\gamma > 0$.  Then
		for any $\gamma' > 0$,
		\[
		\lim\limits_{n \to \infty} n^{\gamma'} [1 -
		\rho_{\mbox{\scriptsize{opt}}}^\dagger(P_n)] = 0 \,.
		\]
	\end{proposition}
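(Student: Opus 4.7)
The plan is to follow the pattern of Proposition~\ref{pro:gaussian} (and its MALA analogue, Proposition~\ref{pro:mala}), but with Theorem~\ref{thm:modeloptima2} in place of Theorem~\ref{thm:modeloptima1}. By the former, it suffices to show that
\[
\sup_{C \in \mathcal{B}:\, \Pi_n(C) > 1/2} \varepsilon_{C} \;=\; o(n^{-\gamma'}) \quad \text{as } n \to \infty,
\]
for every $\gamma' > 0$, where $\varepsilon_C$ is defined as in \eqref{eq:varepsilonC} for the MALA kernel $P_n$. Because the constraint $\Pi_n(C) > 1/2$ is much stronger than merely $\Pi_n(C) > 0$, the analysis should be cleaner than what is (presumably) required in the proof of Proposition~\ref{pro:mala}; in particular, we should be able to dispense with any parameterization of $C$ through its $\Pi_n$-measure.

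First, I would bound $\varepsilon_C$ by a one-step coupling quantity. From \eqref{eq:malakernel}, the MALA kernel splits as $P_n(x, \mathrm{d}y) = q_n(x,y) a_n(x,y)\,\mathrm{d}y + r_n(x)\delta_x(\mathrm{d}y)$, where $q_n(x,\cdot)$ is the $N_n(x - h_n \nabla f_n(x),\, 2h_n I_n)$ density and $r_n(x) = 1 - \int q_n(x,y) a_n(x,y)\,\mathrm{d}y$. For $x \neq x'$, the atoms of $P_n(x,\cdot)$ and $P_n(x',\cdot)$ sit at different locations, so they contribute nothing to the common mass $P_n(x,\cdot) \wedge P_n(x',\cdot)$. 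Using $a_n \leq 1$ and the same argument as in Section~\ref{sssec:gaussian1}, this yields
\[
\varepsilon_C \leq \inf_{x,x' \in C} \int_{\mathbb{R}^n} \min\{q_n(x,y), q_n(x',y)\}\,\mathrm{d}y \;=\; \inf_{x,x' \in C}\bigl[\,1 - d_{\mathrm{TV}}(q_n(x,\cdot), q_n(x',\cdot))\bigr].
\]
The standard Gaussian TV formula, together with (H\hyperref[h2]{2}) (which gives $\|\nabla f_n(x) - \nabla f_n(x')\| \leq M\|x - x'\|$ and hence $\|(x - h_n \nabla f_n(x)) - (x' - h_n \nabla f_n(x'))\| \geq (1 - h_n M)\|x - x'\|$), produces
\[
\varepsilon_C \;\leq\; 2\Phi\!\left(-\frac{(1 - h_n M)\,D_n}{2\sqrt{2 h_n}}\right),
\]
where $D_n$ is the diameter of $C$.

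Next, I would lower-bound $D_n$ using the assumption $\Pi_n(C) > 1/2$ together with (H\hyperref[h1]{1}). Let $G := \sup_{x \in \mathbb{R}} g(x) < \infty$. Because $\pi_n = \prod_{i=1}^n g(x_i) \leq G^n$, we have $\Pi_n(C) \leq G^n \cdot \mathrm{vol}(C)$, so $\mathrm{vol}(C) > 1/(2 G^n)$. The isodiametric inequality $\mathrm{vol}(C) \leq \omega_n (D_n/2)^n$, with $\omega_n$ the volume of the unit ball in $\mathbb{R}^n$, gives $D_n \geq 2(\mathrm{vol}(C)/\omega_n)^{1/n} \geq G^{-1}(2\omega_n)^{-1/n}$. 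Stirling's formula yields $\omega_n^{1/n} \sim \sqrt{2\pi e/n}$, so there is a constant $c > 0$, depending only on $G$, with $D_n \geq c\sqrt{n}$ for all sufficiently large $n$.

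Finally, combining the displays and using $h_n < n^{-\gamma}$ (so $1 - h_n M \geq 1/2$ for large $n$) gives
\[
\sup_{C:\, \Pi_n(C) > 1/2} \varepsilon_C \;\leq\; 2\Phi\!\left(-\tfrac{c}{4\sqrt{2}}\, n^{(1+\gamma)/2}\right),
\]
which decays faster than any polynomial. Theorem~\ref{thm:modeloptima2} then yields $n^{\gamma'}[1 - \rho_{\mbox{\scriptsize opt}}^\dagger(P_n)] \to 0$ for every $\gamma' > 0$. The main technical obstacle is the careful treatment of the mixed (atomic plus absolutely continuous) structure of the MALA kernel in Step~1; the diameter bound, while requiring Stirling's asymptotics, is then routine. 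Everything else reduces to one-variable calculus with the Gaussian TV formula already exploited in Section~\ref{sssec:gaussian1}.
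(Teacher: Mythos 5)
Your proposal is correct and follows essentially the same route as the paper's proof: apply Theorem~\ref{thm:modeloptima2}, bound $\varepsilon_{C}$ by the overlap of the Gaussian proposal densities (handling the atomic part of the MALA kernel) together with the $M$-smoothness from (H\hyperref[h2]{2}), and lower-bound the diameter of any $C$ with $\Pi_n(C)>1/2$. The only difference is in that last step: the paper uses the cruder but sufficient bound $D_n \ge 1/(2G)$ obtained directly from $\Pi_n(C)\le (GD_n)^n$, whereas you invoke the isodiametric inequality to get the stronger (but unneeded) $D_n \gtrsim \sqrt{n}$.
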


	%	As before, we consider the following scenario.
	%	\begin{enumerate}
	%		\item [(H1)]  $\pi_n$ corresponds to independent identically distributed random variables. 
	%		That is, there exists a probability density function $g: \mathbb{R} \to [0,\infty)$ independent of~$n$ such that, for each $n \geq 1$ and $x = (x_1,x_2, \dots, x_n) \in \mathbb{R}^n$, $\pi_n(x) = \prod_{i=1}^{n} g(x_i)$.
	%		Moreover, $\sup_{x \in \mathbb{R}} g(x) < \infty$.
	%		\item [(H2)] 
	%		There exists a constant~$M$ such that, for each $x_1,y_1 \in \mathbb{R}$,
	%		\[
	%		\left| -\frac{\df \log g(x_1)}{\df x_1} + \frac{\df \log g(y_1)}{\df y_1} \right| \leq M |x_1 - y_1| \,.
	%		\] 
	%	\end{enumerate}

	\subsubsection{Markov chains on graphs}
	
	Let $G = (\X,E)$ be an undirected simple graph, and consider a
	Markov chain on the state space~$\X$.  Denote its transition kernel
	by~$P$, and its stationary distribution (which is assumed to
	uniquely exist) by~$\Pi$.  For $x,y \in \X$, $P(x,\{y\}) > 0$
	only if~$x$ and~$y$ are connected by an edge.  (Note that,
	under this assumption, $P(x,\{x\}) = 0$ for each $x \in \X$.)
%	The chain is called a \textit{random walk} if at each step it
%	moves uniformly at random to a new vertex (with which it
%	shares an edge).  
	Let
	\[
	m_0 = \inf_{C \subset \X, \Pi(C) > 1/2} \# (C) \,,
	\]
	where $\#(C)$ is the cardinality of $C \subset \X$, and let
	\[
	m_1 = \sup_{x \in \X} d(x) \,,
	\]
	where $d(x)$ is the degree of~$x$.  In order for the
	minorization condition~(B\hyperref[b2]{2}) to hold on a
	set~$C$, there must exist a vertex $x_C \in \X$ that's
	connected to every vertex in~$C$.  By Proposition~\ref{pro:jerison}, when $m_0 >
	m_1$, it is impossible for (B\hyperref[b1]{1})-(B\hyperref[b3]{3}) to hold
	simultaneously, so
	$\rho_{\mbox{\scriptsize{opt}}}^\dagger(P)=1$.
	
	The following is a specific example.
	
	\begin{figure}
		\centering
		\begin{subfigure}{0.27\textwidth}
			\centering
			\includegraphics[width=\textwidth]{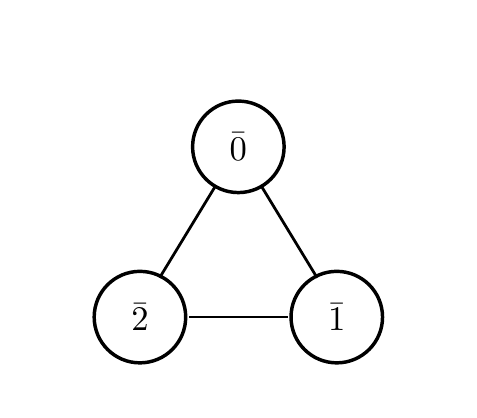}
			\caption{$\mathbb{Z}/n\mathbb{Z}$ with $n=3$.} \label{fig:circle}
		\end{subfigure}
%		\begin{subfigure}{0.24\textwidth}
%			\centering
%			\includegraphics[width=\textwidth]{limitation-fig2b}
%			\caption{$(\mathbb{Z}/2\mathbb{Z})^n$ with $n=2$.}
%		\end{subfigure}
%		\begin{subfigure}{0.24\textwidth}
%			\centering
%			\includegraphics[width=\textwidth]{limitation-fig2c}
%			\caption{Full binary tree of depth $n=2$.}
%		\end{subfigure}
		\qquad\qquad
		\begin{subfigure}{0.27\textwidth}
			\centering
			\includegraphics[width=\textwidth]{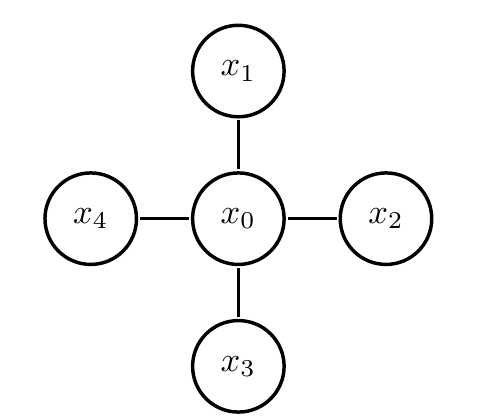}
			\caption{``Star" with $n=4$.} \label{fig:star}
		\end{subfigure}
		\caption{Visualization of the two graph examples.}\label{fig:discrete}
	\end{figure}

	\vspace{.1in} {\noindent \it Random walk on
		$\mathbb{Z}/n\mathbb{Z}$.}  Let $n \geq 3$ be an odd number,
	and let~$\X$ be $\mathbb{Z}/n\mathbb{Z} =
	\{\bar{0},\bar{1},\dots,\overline{n-1}\}$, the set of integers
	mod~$n$.  Arrange the elements of $\X$ in a loop, so that two
	vertices are connected if and only if they differ
	by~$\bar{1}$.  
	See Figure~\ref{fig:circle}.
	For each $\bar{x} \in \X$,
	$P(\bar{x},\{\bar{y}\}) = 1/2$ for $\bar{y} = \bar{x} -
	\bar{1}$ and $\bar{y} = \bar{x}+\bar{1}$.  It's easy to verify
	that the stationary distribution is uniform, $m_0 = (n+1)/2$,
	and $m_1 = 2$.  It follows that (B\hyperref[b1]{1})-(B\hyperref[b3]{3}) cannot hold simultaneously whenever $n
	\geq 5$.  On the other hand, the true convergence rate
	$\rho_*(P)$ is always strictly less than~$1$, and $1 -
	\rho_*(P)$ is of order $n^{-2}$ when~$n$ is large \citep[][page 44]{diaconis1991geometric}.
	
%	\vspace{.1in} {\noindent \it Random walk on
%		$(\mathbb{Z}/2\mathbb{Z})^n$.}  Let $\X =
%	(\mathbb{Z}/2\mathbb{Z})^n$ be the set of binary $n$-tuples,
%	where $n \geq 2$ is a positive integer.  Two vertices in~$\X$
%	are neighbors if all their components coincide except for one.
%	Thus, the degree of each vertex is~$n$, and $m_1 = n$.  For
%	each $x \in \X$ and its neighbor~$y$, $P(x,\{y\}) = 1/n$.
%	Again, the stationary distribution is uniform, and it's easy
%	to see that $m_0 = 2^{n-1} + 1$.  Hence, it's impossible for
%	(B\hyperref[b2]{2}) and (B\hyperref[b3]{3}) to hold
%	simultaneously.  In truth, $\rho_*(P) = 1 - 2/n$.
%	
%	\vspace{.1in}
%	{\noindent \it Random walk on binary tree.}
%	Consider the full binary tree of depth~$n$, where $n \geq 1$.
%	The degree of any vertex is either~$1$,~$2$, or~$3$.
%	For each $x \in \X$ and its neighbor~$y$, $P(x,\{y\}) = 1/d(x)$.
%	It can be shown that, for each $x \in \X$, $\Pi(\{x\}) = d(x)/[2(2^{n+1}-2)]$.
%	Therefore, $m_0 > (2^{n+1}-2)/3$, and $m_1 = 3$.
%	It follows that (B\hyperref[b2]{2}) and (B\hyperref[b3]{3}) cannot hold simultaneously when $n \geq 3$.
%	In truth, $\rho_*(P) < 1$, and $1- \rho_*(P)$ is of order $2^{-n}$ for large~$n$.
	
	Next, we provide an example where (B\hyperref[b1]{1})-(B\hyperref[b3]{3}) is capable of producing sharp bounds.
	
	\vspace{.1in} {\noindent \it Lazy random walk on a ``star".}
	Consider a graph with a central vertex, $x_0$, and~$n$ outside
	vertices, $x_1,x_2,\dots,x_n$.  Each outside vertex is
	connected to~$x_0$ but not to other vertices.  
	See Figure~\ref{fig:star}.
	For this
	example, we allow the chain to be lazy, i.e., we no longer
	assume that $P(x,\{x\}) = 0$ for each $x \in \X$.  To be
	specific, assume that
	\[
	P(x_0,\{x_i\}) = \theta 1_{i=0} + \frac{1-\theta}{n} 1_{i \neq 0} \,,
	\]
	and that, for $i \neq 0$,
	\[
	P(x_i,\{x_j\}) = \theta 1_{j=i} + (1-\theta) 1_{j=0} \,,
	\]
	where $\theta \in (0,1)$ is a constant.
	In this case, $\rho_*(P) = \max\{\theta,1-2\theta\}$, which is independent of~$n$ \citep[][page 49]{diaconis1991geometric}.
	
	It can be verified that $\Pi(\{x_0\}) = 1/2$, and $\Pi(\{x_i\}) = 1/(2n)$ for $i \neq 0$.
	Therefore, $\Pi(C) > 1/2$ if and only if $\{x_0\}$ is a proper subset of~$C$.
	It's not difficult to verify that when $C = \{x_0,x_i\}$ for some $i \neq 0$, 
	\[
	\varepsilon_C = \min\{\theta,(1-\theta)/n\} + \min\{1-\theta,\theta\}  \,.
	\]
	If $C$ contains additional elements, $\varepsilon_C$ is, of course, smaller.
	By Theorem~\ref{thm:modeloptima2}, 
	\[
	\rho_{\mbox{\scriptsize{opt}}}^\dagger(P) \geq \max\{\theta,1
	- \theta\} - \min\{\theta,(1-\theta)/n\} \,.
	\]
	This does not rule out the possibility of obtaining a sharp upper bound on $\rho_*(P)$ based on (B\hyperref[b1]{1}) to (B\hyperref[b3]{3}).
	Indeed, letting $V(x_i) = 0$ for $i = 0,1,\dots,n$, one can show that (B\hyperref[b1]{1}) holds with $\eta = 0$, $L=0$, while (B\hyperref[b2]{2}) and (B\hyperref[b3]{3}) hold with $d > 0$ and $\varepsilon = \min\{\theta, 1- \theta\}$.
	Plugging these parameters into Theorem~\ref{thm:rosenbound} yields a coupling-based upper bound on the true convergence rate $\rho_*(P)$, that is,
	\[
	1 - \varepsilon = \max\{\theta, 1- \theta\} \,.
	\]
	Note that this bound is tight whenever $\theta \geq 1/2$.
	
	\section{Discussion} \label{sec:beyondoptima}
	
	We have demonstrated that methods based on single-step d\&m can have
	serious limitations in the construction of convergence rate bounds for
	Markov chains.  On the other hand, it is certainly not the case that
	these methods are incapable of producing sharp bounds, see, e.g.,
	\citet{yang2019complexity,qin2019convergence,ekvall2019convergence}.
	It's also worth noting that we've only examined the most basic versions
	of d\&m.  
	Indeed, as mentioned in the Introduction,
	multi-step d\&m can lead to sharp
	convergence rate bounds, even in situations where single-step methods
	fail completely.  
	Unfortunately, it is almost always impossible to get
	a handle on (much less a closed form for) the multi-step Mtks
	associated with practically relevant Monte Carlo Markov chains.  
%	As
%	such, sharp multi-step drift and minorization inequalities are
%	typically out of reach.  For a more detailed discussion on this issue,
%	see, e.g., Section~2 of \cite{qin2018wasserstein}.
	
	More generally, one can construct coupling schemes that take
	into account the multi-step dynamics of a Markov chain, and
	these can lead to sharp convergence rate bounds.  One such
	technique is \textit{one-shot coupling}
	\citep{roberts2002one}, which can be understood as
	constructing convergence bounds with respect to some
	Wasserstein distance, and then transforming these to total
	variation bounds \citep{madras2010quantitative}.  These types
	of techniques are promising because convergence rate bounds
	with respect to Wasserstein distances are often more robust to
	increasing dimension, see, e.g.,
	\citet{hairer2014spectral,monmarche2018elementary,durmus2019high,eberle2019quantitative,qin2018wasserstein,bou2020coupling}.
	The idea of drift and minorization can also be
	employed to convergence analysis with respect to Wasserstein
	distances, and this has led to a series of useful
	results \citep[see,
	e.g.,][]{hairer2011asymptotic,butkovsky2014subgeometric,douc2018markov}.
	
	There are, of course, other families of methods for estimating the convergence rates of Markov chains.
	Most notably, for many important problems, it's possible to construct well-behaved bounds on the $L^2$ convergence rates of Markov chains using functional inequalities \citep[see, e.g.,][]{holley1988simulated,lawler1988bounds,jerrum2004elementary,cattiaux2014long}.
	
	We end this section with a brief discussion of
	the limitations of our method, which point to possible avenues
	for future work.
%	The chain-specific optimal bounds in Theorems~\ref{thm:modeloptima1} and~\ref{thm:modeloptima2} involve the stationary distribution~$\Pi$.
%	In particular, to apply Theorem~\ref{thm:modeloptima1}, for a subset~$C$ of the state space, one needs to bound $\Pi(C)$ from above.
%	This could require hard work when the stationary distribution is intractable.
	The parameter-specific bound in Theorem~\ref{thm:paraoptima} is tight only for chains that are reversible and non-negative definite.
	To find good estimates of the parameter-specific optimal bound when at least one of (S\hyperref[s1]{1}) and (S\hyperref[s2]{2}) does not hold, one needs to find slowly converging chains that satisfy (A\hyperref[a1]{1})-(A\hyperref[a3]{3}) with the given parameter value, but do not satisfy both (S\hyperref[s1]{1}) and (S\hyperref[s2]{2}).
	This proves to be a challenging task.
	One limitation of Theorem~\ref{thm:modeloptima1} is that, unlike Theorem~\ref{thm:modeloptima2}, it cannot provide non-trivial bounds for chains on discrete state spaces.
	This is because $1-\varepsilon_C = 0$ whenever~$C$ is a singleton.
	It is unclear whether this can be overcome by a more delicate analysis of the chain-specific optimal bound.

	\vspace*{10mm} 
	
	%\noindent{\Large \bf Appendix}
	\appendix

	\section{Optimal bounds for chains that do not satisfy both (S\hyperref[s1]{1}) and (S\hyperref[s2]{2})} \label{app:nonrev}
	
%	In Section~\ref{ssec:modeloptima}, it was noted that, for a reversible and non-negative definite Mtk~$P$, its chain-specific optimal bound based on (A\hyperref[a1]{1})-(A\hyperref[a3]{3}) is
%	\[
%	\rho_{\mbox{\scriptsize{opt}}}^{*(N)}(P) = \inf_{(\lambda, K, \varepsilon,
%		\beta) \in T(P)} \rho^{(N)}_{\mbox{\scriptsize{opt}}}(\lambda, K,
%	\varepsilon, \beta) \,,
%	\]
%	where
%	\[
%	\rho^{(N)}_{\mbox{\scriptsize{opt}}}(\lambda,K,\varepsilon,\beta) = \sup_{P
%		\in S^{(N)}_{\lambda,K,\varepsilon,\beta}} \rho_*(P) 
%	\]
%	is the parameter-specific optimal bound based on (A\hyperref[a1]{1})-(A\hyperref[a3]{3}) in addition to (S\hyperref[s1]{1}) and (S\hyperref[s2]{2}).
%	Similar formulas can be derived for chains that do not satisfy both (S\hyperref[s1]{1}) and (S\hyperref[s2]{2}).
	
	For $(\lambda,K,\varepsilon,\beta) \in T_0$, let $S_{\lambda,K,\varepsilon,\beta}$ be the collection of Mtks that satisfy (A\hyperref[a1]{1})-(A\hyperref[a3]{3}) with this parameter value, and let $S^{(R)}_{\lambda,K,\varepsilon,\beta}$ be the subset of $S_{\lambda,K,\varepsilon,\beta}$ consisting of reversible Mtks, so that $S^{(N)}_{\lambda,K,\varepsilon,\beta} \subset S^{(R)}_{\lambda,K,\varepsilon,\beta} \subset S_{\lambda,K,\varepsilon,\beta}$.
	Then
	\[
	\rho_{\mbox{\scriptsize{opt}}}(\lambda,K,\varepsilon,\beta) = \sup_{P
		\in S_{\lambda,K,\varepsilon,\beta}} \rho_*(P) 
	\]
	is the parameter-specific optimal bound based on (A\hyperref[a1]{1})-(A\hyperref[a3]{3}) alone, and
	\[
	\rho^{(R)}_{\mbox{\scriptsize{opt}}}(\lambda,K,\varepsilon,\beta) = \sup_{P
		\in S^{(R)}_{\lambda,K,\varepsilon,\beta}} \rho_*(P) 
	\]
	is the parameter-specific optimal bound based on (A\hyperref[a1]{1})-(A\hyperref[a3]{3}) and (S\hyperref[s1]{1}).
	It's evident that
	\begin{equation} \label{ine:rho-opt-order}
	\rho^{(N)}_{\mbox{\scriptsize{opt}}}(\lambda,K,\varepsilon,\beta) \leq \rho^{(R)}_{\mbox{\scriptsize{opt}}}(\lambda,K,\varepsilon,\beta) \leq \rho_{\mbox{\scriptsize{opt}}}(\lambda,K,\varepsilon,\beta) \,.
	\end{equation}
	
	Recall that~$\rho_{\mbox{\scriptsize{opt}}}^*(P)$ is the best simple upper bound that can be constructed for an Mtk~$P$ based on (A\hyperref[a1]{1})-(A\hyperref[a3]{3}) along with (S\hyperref[s1]{1}) and (S\hyperref[s2]{2}) (if applicable), after the d\&m parameter is optimized.
	If~$P$ is reversible and non-negative definite, then, as noted in Section~\ref{ssec:modeloptima}, 
	\[
	\rho_{\mbox{\scriptsize{opt}}}^*(P) = \rho_{\mbox{\scriptsize{opt}}}^{*(N)}(P) := \inf_{(\lambda, K, \varepsilon,
		\beta) \in T(P)} \rho^{(N)}_{\mbox{\scriptsize{opt}}}(\lambda, K,
	\varepsilon, \beta) \,.
	\]
	If~$P$ is reversible but not non-negative definite, then
	\[
	\rho_{\mbox{\scriptsize{opt}}}^*(P) = \inf_{(\lambda, K, \varepsilon,
		\beta) \in T(P)} \rho^{(R)}_{\mbox{\scriptsize{opt}}}(\lambda, K,
	\varepsilon, \beta) \,.
	\]
	If~$P$ is neither reversible nor negative definite, then
	\[
	\rho_{\mbox{\scriptsize{opt}}}^*(P) = \inf_{(\lambda, K, \varepsilon,
		\beta) \in T(P)} \rho_{\mbox{\scriptsize{opt}}}(\lambda, K,
	\varepsilon, \beta) \,.
	\]
	By~\eqref{ine:rho-opt-order}, in all three cases, $\rho_{\mbox{\scriptsize{opt}}}^{*(N)}(P) \leq \rho_{\mbox{\scriptsize{opt}}}^*(P)$.

	\section{Proof of Theorem~\ref{thm:pic-1}}
	\label{app:pic-1}
	
	\begin{proof}
		(A\hyperref[a1]{1}) implies that
		\[
		PV(x) \leq \lambda V(x) + K1_C(x) \,.
		\]
		Along with the other assumptions, this drift condition
		implies that the chain is geometrically ergodic
		\citep[Theorem 15.0.1]{meyn2009markov}.  Thus,~$P$
		admits a unique stationary distribution~$\Pi$.
		
		If $\lambda = 0$, then $PV(x) \leq 0$ for each $x
		\not\in C$.  Since $V \geq 1$, this implies that $C =
		\X$, and $\Pi(C) = 1$.  In the remainder of the proof,
		we assume that $\lambda \in (0,1)$.
		
		Note that $C \neq \emptyset$.
		Otherwise, (A\hyperref[a1]{1}) implies that $P^mV(x) \leq \lambda^m V(x)$ for each positive integer~$m$ and $x \in \X$.
		This implies that $P^m V(x) < 1$ for sufficiently large~$m$, which is not possible since $V \geq 1$. 
		
		Let $\{X_m\}_{m=0}^{\infty}$ be a chain evolving according to~$P$ such that $X_0 \in C$ is fixed, and let $\{\mathcal{F}_m\}$ be its natural filtration.
		For $m \geq 1$, let $N_m = \sum_{i=0}^{m-1} 1_C(X_i)$, and set $N_0 = 0$.
		We now use a standard argument to bound $\mathbb{P}(N_m \geq i)$ for a positive integer~$i$ \citep[see, e.g.,][]{roberts2004general}.
		For any positive integer~$j$, let
		\[
		Z_j = (\lambda^{-1}K)^{-N_j-1} \lambda^{-j} V(X_j) \,.
		\]
		If $X_{j-1} \not\in C$, then $N_j = N_{j-1}$, and
		\[
		\begin{aligned}
		\mathbb{E}(Z_j|\mathcal{F}_{j-1})
		 &= (\lambda^{-1}K)^{-N_{j-1}-1} \lambda^{-j} \mathbb{E}[V(X_j)|X_{j-1}] \\
		 &\leq (\lambda^{-1}K)^{-N_{j-1}-1} \lambda^{-j} \lambda V(X_{j-1})  \\
		 &= Z_{j-1} \,.
		\end{aligned}
		\]
		If $X_{j-1} \in C$, then $N_j = N_{j-1} + 1$, and
		\[
		\begin{aligned}
		\mathbb{E}(Z_j|\mathcal{F}_{j-1}) &= (\lambda^{-1}K)^{-N_{j-1}-2} \lambda^{-j} \mathbb{E}[V(X_j)|X_{j-1}] \\
		&\leq (\lambda^{-1}K)^{-N_{j-1}-2} \lambda^{-j} K \\
		&\leq Z_{j-1} \,.
		\end{aligned}
		\]
		It follows that
		\[
		\mathbb{E}Z_m \leq \mathbb{E}Z_1 \leq (\lambda^{-1}K)^{-1} \,.
		\]
		By Markov's inequality,
		\[
		\mathbb{P}(N_m < i) \leq \mathbb{E} (\lambda^{-1}K)^{i-1-N_m} \leq (\lambda^{-1}K)^i \lambda^{m} \mathbb{E} Z_m \leq (\lambda^{-1}K)^{i-1} \lambda^{m} \,.
		\]
		For a positive integer~$m$, let $i_m = \lfloor m \log \lambda^{-1} / (\log \lambda^{-1} + \log K) \rfloor$. 
		Then $(\lambda^{-1}K)^{i_m} \leq \lambda^{-m}$. 
		It follows that
		\begin{equation} \label{eq:Nm}
		\mathbb{E}N_m \geq \sum_{i=1}^{i_m} \mathbb{P}(N_m \geq i) \geq i_m - \lambda^m \frac{(\lambda^{-1}K)^{i_m}-1}{\lambda^{-1}K - 1} \geq i_m - \frac{1-\lambda^m}{\lambda^{-1}K - 1} \,.
		\end{equation}
		The strong law of large numbers holds for any ergodic chain \citep[][Theorem~3]{tierney1994markov}.
		Therefore, $N_m/m \to \Pi(C)$ as $m \to \infty$, almost surely.
		Since $N_m/m$ is bounded, by the dominated convergence theorem, $\mathbb{E}N_m/m \to \Pi(C)$ as $m \to \infty$.
		Then~\eqref{eq:Nm} implies that
		\[
		\Pi(C) \geq \frac{\log \lambda^{-1}}{\log K + \log \lambda^{-1}}  \,.
		\]
	\end{proof}

	\section{Proof of Proposition~\ref{pro:gaussian}}
	\label{app:gaussian}
	
	\begin{proof}
		By~\eqref{ine:gaussianoptima},
		\begin{equation} \nonumber
		\rho_n^* = \inf_{D > 0} \left[ 1 - 2\Phi\left( -\frac{D}{2\sqrt{3}} \right) \right]^{1/\lfloor \alpha_n(D) \rfloor} \leq \rho_{\mbox{\scriptsize{opt}}}^*(P_n) \leq 1 \,,
		\end{equation}
		where 
		\[
		\alpha_n(D) = \left(\int_0^{D^2/4} f_n(x) \, \df x \right)^{-1} \,,
		\]
		and $f_n$ is the density function of the $\chi^2_n$
		distribution.  It suffices to show that $\rho_n^* \to
		1$ as $n \to \infty$.  Let $\delta > 0$ be arbitrary.
		There exists $M_{\delta} > 0$ such that
		\[
		1 - 2\Phi\left( -\frac{D}{2\sqrt{3}} \right) \geq 1 - \delta
		\]
		whenever $D \geq M_{\delta}$.
		Since $\alpha_n(D)$ is bounded below by~$1$,
		\begin{equation} \label{eq:DMdelta}
		\left[1 - 2\Phi\left( -\frac{D}{2\sqrt{3}} \right)
		\right]^{1/\lfloor \alpha_n(D) \rfloor} \geq 1 -
		\delta \; \quad \forall D \geq M_{\delta} \,.
		\end{equation}
		Now, let $0 < D < M_{\delta}$.
		By the mean value theorem,
		\[
		1 - 2\Phi\left( -\frac{D}{2\sqrt{3}} \right) \geq A_{\delta} D \,,
		\]
		where
		\[
		A_{\delta} = \frac{1}{\sqrt{6\pi}} \exp \left( -\frac{M_{\delta}^2}{24} \right) \,.
		\]
		The mode of $f_n(\cdot)$ is known to be $n-2$ for $n \geq 2$.
		For $n \geq M_{\delta}^2/4 + 2$,
		\[
		\frac{1}{\alpha_n(D)} = \int_0^{D^2/4} f_n(x) \df x \leq \frac{ f_n(M_{\delta}^2/4) }{4} D^2 \,.
		\]
		It follows that, when $n \geq M_{\delta}^2/4 + 2$,
		\[
		\frac{1}{\lfloor \alpha_n(D) \rfloor} \leq \left\lfloor \frac{4}{f_n(M_{\delta}^2/4) D^2} \right\rfloor^{-1} \,.
		\]
		Recall that, for each $x > 0$,
		\[
		f_n(x) = \frac{1}{2^{n/2} \Gamma(n/2)} x^{n/2-1} e^{-x/2} \,.
		\]
		It's easy to verify that, as $n \to \infty$, $f_n(x) \to 0$.
		It follows that there exists $N_{\delta} > M_{\delta}^2/4+2$ such that, for each $n \geq N_{\delta}$,
		\[
		\left\lfloor \frac{4}{f_n(M_{\delta}^2/4) D^2} \right\rfloor^{-1} \leq \left( \frac{4}{f_n(M_{\delta}^2/4) D^2} - 1 \right)^{-1}  \leq \frac{f_n(M_{\delta}^2/4)D^2}{4 - f_n(M_{\delta}^2/4) M_{\delta}^2} \leq B_{\delta} D^2 \,,
		\]
		where 
		\[
		B_{\delta} = -2eA_{\delta}^2 \log(1-\delta) \,.
		\]
		Thus, for $n \geq N_{\delta}$ (and $D \in (0,M_{\delta})$),
		\[
		\left[ 1 - 2\Phi\left( -\frac{D}{2\sqrt{3}} \right) \right]^{1/\lfloor \alpha_n(D) \rfloor} \geq (A_{\delta}D)^{B_{\delta}D^2} \,.
		\]
		Taking the derivative shows that $B_{\delta} x^2 \log (A_{\delta} x)$ is minimized when $x = e^{-1/2} A_{\delta}^{-1}$, and
		\[
		\inf_{x > 0} (A_{\delta}x)^{B_{\delta}x^2} = 1 - \delta \,.
		\]
		Combining this fact with~\eqref{eq:DMdelta} shows that
		\[
		\inf_{D > 0} \left[ 1 - 2\Phi\left( -\frac{D}{2\sqrt{3}} \right) \right]^{1/\lfloor \alpha_n(D) \rfloor} \geq 1-\delta
		\]
		whenever $n \geq N_{\delta}$.
		This completes the proof.
	\end{proof}
	
	\section{Proof of Proposition~\ref{pro:mala}}
	\label{app:mala}
	
	\begin{proof}
		The proof is an application of
		Theorem~\ref{thm:modeloptima1}.  For a positive
		integer~$n$, let $C_n$ be a measurable subset of
		$\mathbb{R}^n$ such that $\Pi_n(C_n) > 0$.  Since
		$\Pi_n$ admits a density function, the diameter of
		$C_n$, which we denote by $D_n$, must be
		non-vanishing.  It's clear that $\Pi_n(C_n) \leq
		(GD_n)^n$, where $G = \sup_{x \in \mathbb{R}} g(x) <
		\infty$, and $g: \mathbb{R} \to [0,\infty)$ is defined
		in Condition (H\hyperref[h1]{1}).

		Next, we bound $\varepsilon_{C_n}$. 
		By~\eqref{eq:malakernel}, the transition kernel of a MALA chain
		can be written as
		\[
		P_n(x,\df y) = a_n(x,y) q_n(x,y) \, \df y + \int_{\X}
		\big[ 1-a_n(x,z) \big] q_n(x,z) \, \df z \,
		\delta_x(\df y) \,, \quad x \in \X \,,
		\]
		where $q_n(x,\cdot)$ is the probability density
		function of the $\mbox{N}_n(x-h_n\nabla f_n(x),
		2h_nI_n)$ distribution, and $a_n(\cdot,\cdot) \leq 1$
		is defined as in~\eqref{eq:malakernel}.  Suppose that
		(A\hyperref[a2]{2}) holds on $C_n$ with $\varepsilon_n
		> 0$ and probability measure $\nu_n$.  Let $x,x' \in
		C_n$ be such that $x \neq x'$.  
		(A\hyperref[a2]{2}) implies that
		\begin{equation} \label{ine:mala-minor-1}
		a_n(x,y) q_n(x,y) \, \df y + \int_{\X} \big[
		1-a_n(x,z) \big] q_n(x,z) \, \df z \, \delta_x(\df
		y) \geq \varepsilon_n \nu_n(\df y) \;,
		\end{equation}
		and
		\begin{equation} \label{ine:mala-minor-2}
		a_n(x',y) q_n(x',y) \, \df y + \int_{\X} \big[
		1-a_n(x',z) \big] q_n(x',z) \, \df z \,
		\delta_{x'}(\df y) \geq \varepsilon_n \nu_n(\df y)
		\,.
		\end{equation}
		Integrating both sides of~\eqref{ine:mala-minor-1} over the set $\{x'\}$ shows that $\nu_n(\{x'\}) = 0$.
		Analogously, $\nu_n(\{x\}) =
		0$.
		Then~\eqref{ine:mala-minor-1} and~\eqref{ine:mala-minor-2} imply that the following hold almost everywhere:
		\[
		q_n(x,y) \geq \varepsilon_n \nu_n(\df y)/\df y \;\;\;
		\mbox{and} \;\;\; q_n(x',y) \geq \varepsilon_n \nu_n(\df
		y)/ \df y \,.
		\]
		By an argument that was used in Section~\ref{sssec:gaussian1},
		\[
		\begin{aligned}
		\varepsilon_n &\leq \int_{\X} \min \{ q_n(x,y),
		q_n(x',y) \} \, \df y \\
		&= 2 \Phi \left(
		-\frac{\|x-h_n\nabla f_n(x) - x' + h_n\nabla
			f_n(x')\|}{2\sqrt{2h_n}} \right) \,.
		\end{aligned}
		\]
		Recall that, under (H\hyperref[h1]{1}) and (H\hyperref[h2]{2}), $f_n$ is $M$-smooth.
		Thus, whenever $h_n M < 1$,
		\[
		\|x-h_n\nabla f_n(x) - x' + h_n\nabla f_n(x')\| \geq (1-h_n M) \|x-x'\| \,.
		\]
		Letting~$x$ and $x'$ vary in~$C_n$ shows that
		\[
		\varepsilon_{C_n} \leq 2 \Phi\left( -\frac{(1-h_n M)D_n}{2\sqrt{2h_n}} \right) \,.
		\]
		By Theorem~\ref{thm:modeloptima1} and the assumption that $h_n < n^{-\gamma}$, for $n$ large enough, $1 - h_nM > 1/\sqrt{2}$, and
		\begin{equation} \label{ine:malarate0}
		\begin{aligned}
		\rho_{\mbox{\scriptsize{opt}}}^*(P_n) &\geq \inf_{D > 0} \left[ 1 - 2 \Phi\left( -\frac{(1-h_n M)D}{2\sqrt{2h_n}} \right) \right]^{\lfloor (G D)^{-n} \rfloor^{-1} \wedge 1} \\
		&\geq \inf_{D > 0} \left[ 1 - 2 \Phi\left( -D n^{\gamma/2} /4 \right) \right]^{\lfloor (G D)^{-n} \rfloor^{-1} \wedge 1} \,,
		\end{aligned}
		\end{equation}
		where $a \wedge b$ means $\min \{a,b\}$.
		When $D > 1/(2G)$,
		\begin{equation} \label{ine:malarate1}
		\left[ 1 - 2 \Phi\left( -D n^{\gamma/2} /4 \right) \right]^{\lfloor (G D)^{-n} \rfloor^{-1} \wedge 1} \geq 1 - 2 \Phi(-n^{\gamma/2}/(8G)) \,.
		\end{equation}
		When $D \leq 1/(2G)$, $\lfloor (GD)^{-n} \rfloor \geq (2^{n-1}-1)(GD)^{-1}$.
		This implies that, whenever $D \leq 1/(2G)$ and $n > 1$,
		\begin{equation} \label{ine:malarete2}
		\begin{aligned}
		\left[ 1 - 2 \Phi\left( -D n^{\gamma/2} /4 \right)
		\right]^{\lfloor (G D)^{-n} \rfloor^{-1} \wedge 1}
		&\geq \left[ 1 - 2 \Phi\left( -D n^{\gamma/2} /4
		\right) \right]^{GD/(2^{n-1}-1)} \\ &\geq \left\{
		\inf_{x > 0} \big[ 1-2\Phi(-x/4) \big]^{Gx}
		\right\}^{1/(2^{n-1}n^{\gamma/2}-n^{\gamma/2})} .
		\end{aligned}
		\end{equation}
		To proceed, we study the asymptotic behavior of the
		right-hand sides of \eqref{ine:malarate1} and
		\eqref{ine:malarete2}.  One can verify that, for any
		$\gamma' > 0$, as $n \to \infty$, $n^{\gamma'}
		\Phi(-n^{\gamma/2}/(8G)) \to 0$.  Moreover,
		\[
		c = \inf_{x > 0} [1-2\Phi(-x/4)]^{Gx} > 0 \,,
		\]
		which implies that, for any $\gamma' > 0$,
		\[
		\lim\limits_{n \to \infty} n^{\gamma'} \left[ 1- c^{1/(2^{n-1}n^{\gamma/2}-n^{\gamma/2})} \right] = 0 \,.
		\]
		The proof is completed by combining \eqref{ine:malarate0}, \eqref{ine:malarate1}, and \eqref{ine:malarete2}.
	\end{proof}
	
	\section{Proof of an Assertion in Remark~\ref{rem:jerison}} \label{app:jerison}
	For a set $\tilde{C} \in \mathcal{B} \times \mathcal{B}$, let
	\[
	\tilde{C}_1 = \{x \in \X: (x,y) \in \tilde{C} \}, \quad \tilde{C}_2 = \{y \in \X: (x,y) \in \tilde{C} \}
	\]
	be its projections.
	The following is a generalization of \pcite{jerison2016drift} Proposition 2.16.
	\begin{proposition} \label{pro:bivariate}
		Let~$P$ be a transition kernel that admits a
		stationary distribution~$\Pi$.  Suppose that there
		exist $\lambda' < 1$, $K' \in (0,\infty)$, $\tilde{C} \in
		\mathcal{B} \times \mathcal{B}$, and measurable functions $V_1: \X \to
		[0,\infty)$ and $V_2: \X \to [0,\infty)$ such
		that $\inf_{x,y \in \X} [V_1(x) + V_1(y)]
		> 0$, $\tilde{C}_i \in \mathcal{B}$ for $i=1,2$, and
		\begin{equation} \nonumber
		PV_1(x) + P V_2(y) \leq \lambda' [V_1(x) + V_2(y)]
		1_{(\X \times \X) \setminus \tilde{C}}(x,y) + K'
		1_{\tilde{C}}(x,y)
		\end{equation}
		for each $x,y \in \X$.
		Then $\Pi(\tilde{C}_1)+\Pi(\tilde{C}_2) > 1$.
	\end{proposition}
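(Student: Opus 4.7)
The plan mirrors the proof of Proposition~\ref{pro:jerison}, lifted to a bivariate setting. Let $m := \inf_{x,y \in \X}[V_1(x) + V_2(y)] > 0$ (the stated hypothesis ensures this is positive), and observe that $\Pi \otimes \Pi$ is stationary for the product kernel $\tilde{P}((x,y),\cdot) = P(x,\cdot) \otimes P(y,\cdot)$ while $\tilde{V}(x,y) := V_1(x) + V_2(y)$ satisfies $\tilde{P}\tilde{V} \leq \lambda' \tilde{V} 1_{\tilde{C}^c} + K' 1_{\tilde{C}}$ with $\tilde{V} \geq m$ pointwise.

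The key step is a slice-wise application of the drift. For any $y \in \tilde{C}_2^c$, the entire row $\X \times \{y\}$ lies in $\tilde{C}^c$, so $PV_1(x) + PV_2(y) \leq \lambda'[V_1(x) + V_2(y)]$ holds for every $x \in \X$. Integrating in $x$ against $\Pi$ and using $\int PV_1 \, \df\Pi = \Pi V_1$ yields $(1-\lambda')\Pi V_1 \leq \lambda' V_2(y) - PV_2(y)$, which rearranges to the strict contraction
\[
PV_2(y) \;\leq\; \lambda' V_2(y) - (1-\lambda')\Pi V_1 \qquad \text{on } \tilde{C}_2^c,
\]
and, symmetrically, $PV_1(x) \leq \lambda' V_1(x) - (1-\lambda')\Pi V_2$ on $\tilde{C}_1^c$. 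On $\tilde{C}_i$, the drift at some pair $(x,y) \in \tilde{C}$ combined with the pointwise lower bound $PV_j \geq \inf V_j$ yields $PV_i \leq K' - \inf V_j$.

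Next, I would invoke the stationarity identity $\Pi V_i = \int_{\tilde{C}_i^c} PV_i \,\df\Pi + \int_{\tilde{C}_i} PV_i \,\df\Pi$ for $i=1,2$, substitute the slice-wise bounds above, and combine the two resulting inequalities. The hypothesis $m>0$ forces each contraction to be strict, and a preliminary observation that $\tilde{C}$ must be non-empty (otherwise direct integration against $\Pi\otimes\Pi$ would force $\Pi V_1 + \Pi V_2 = 0$, contradicting $m>0$) should promote the final non-strict inequality to the strict bound $\Pi(\tilde{C}_1) + \Pi(\tilde{C}_2) > 1$.

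The main obstacle is the algebraic combination in the last step: the two slice-wise bounds couple $\Pi V_1$ with $\Pi(\tilde{C}_2^c)$ (and symmetrically), so the resulting system of inequalities is not diagonal. Reducing it cleanly to a single strict inequality will likely require a symmetrization that averages the two inequalities, and may benefit from an iterative refinement in which the improved lower bound on $V_i$ off $\tilde{C}_i$ is fed back into the integration, much as in Jerison's univariate argument where $V > d$ on $C^c$ is the crucial ingredient.
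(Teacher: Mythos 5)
There is a genuine gap, and it sits exactly where you flag it: the ``algebraic combination in the last step'' cannot be completed along the route you propose. Your slice-wise bounds, after integrating against $\Pi$ and using stationarity, yield (for $i=2$) $(1-\lambda')\big[\Pi V_2 + \Pi V_1\,(1-\Pi(\tilde{C}_2))\big] \leq K'\,\Pi(\tilde{C}_2)$ and its symmetric counterpart; adding them gives only $\Pi(\tilde{C}_1)+\Pi(\tilde{C}_2) \geq (1-\lambda')(\Pi V_1+\Pi V_2)/K'$, a quantity that can be arbitrarily small since $K'$ is not controlled by $m$. Nothing in this system forces the sum to exceed $1$. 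Moreover, the ``iterative refinement'' you hope to borrow from Jerison's univariate argument has no fuel here: that argument runs on the level-set structure $V>d$ on $C^{c}$ together with the quantitative hypothesis $d>2L/(1-\eta)$, whereas in this proposition $\tilde{C}$ is an arbitrary measurable set and no lower bound for $V_i$ off $\tilde{C}_i$ is available. Working only with the product coupling $\Pi\otimes\Pi$ (which is what slice-wise integration amounts to) can at best show $\Pi(\tilde{C}_1)\Pi(\tilde{C}_2)>0$, i.e., that both projections have positive mass.

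The missing idea is that the pointwise drift inequality can be integrated against \emph{any} probability measure $\tilde{\Pi}$ on $\X\times\X$ whose two marginals are $\Pi$, not just the product measure; stationarity of each marginal then gives $K'\tilde{\Pi}(\tilde{C}) \geq (1-\lambda')(\Pi V_1+\Pi V_2) > 0$ uniformly over all such couplings (after checking $\Pi V_1+\Pi V_2<\infty$, which you also use implicitly without justification when rearranging $(1-\lambda')\Pi V_1 \le \lambda' V_2(y)-PV_2(y)$). Since $\tilde{C}\subset \tilde{C}_1\times\tilde{C}_2$, it remains to observe that if $\Pi(\tilde{C}_1)+\Pi(\tilde{C}_2)\leq 1$ one can exhibit a coupling giving $\tilde{C}_1\times\tilde{C}_2$ zero mass (an explicit ``anti-correlated'' construction does this), contradicting the uniform positivity. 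This is the argument the paper uses, and it is the step that converts positivity of $\tilde{\Pi}(\tilde{C})$ into the overlap statement $\Pi(\tilde{C}_1)+\Pi(\tilde{C}_2)>1$; your proposal never accesses any coupling other than the product one and so cannot reach that conclusion.
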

	\begin{proof}
		Let $\tilde{\Pi}$ be a probability measure on $(\X \times \X, \mathcal{B} \times \mathcal{B})$ such that, for each $A \in \mathcal{B}$,
		\begin{equation} \label{eq:tildepi}
		\tilde{\Pi}(A \times \X) = \tilde{\Pi}(\X \times A) = \Pi(A) \,.
		\end{equation}
		In other words, $\tilde{\Pi}$ is the joint distribution of some random element $(X,Y)$ such that, marginally,~$X$ and~$Y$ are distributed as~$\Pi$.
		By assumption, for each $x,y \in \X$,
		\[
		PV_1(x) + PV_2(y) \leq \lambda' V_1(x) + \lambda' V_2(y) + K' 1_{\tilde{C}}(x,y) \,.
		\]
		Since~$\Pi$ is the stationary distribution, taking expectations on both sides with respect to $\tilde{\Pi}(\df x, \df y)$ yields
		\[
		\Pi V_1 + \Pi V_2 \leq \lambda' \Pi V_1 + \lambda' \Pi V_2 + K' \tilde{\Pi}(\tilde{C}) \,,
		\]
		where $\Pi V_i = \int_{\X} V_i(x) \Pi(\df x)$.
		By Proposition 4.24 in \cite{hairer2006ergodic}, $\Pi V_1 + \Pi V_2 < \infty$.
		It follows that
		\[
		K' \tilde{\Pi}(\tilde{C}) \geq (1-\lambda') (\Pi V_1 + \Pi V_2) \geq (1-\lambda') \inf_{x,y \in \X} [V_1(x) + V_1(y)] > 0 \,.
		\]
		Since $K' < \infty$, $\tilde{\Pi}(\tilde{C}) > 0$.
		
		It's now straightforward to prove the result by
		contradiction.  Suppose that $\Pi(\tilde{C}_1)+\Pi(\tilde{C}_2) \leq 1$, and note that $\tilde{C} \subset \tilde{C}_1 \times \tilde{C}_2$.  
		We
		only need to construct a joint
		distribution~$\tilde{\Pi}$ such
		that~\eqref{eq:tildepi} holds and $\tilde{\Pi}(\tilde{C}_1 \times \tilde{C}_2) = 0$. 
		If $\Pi(\tilde{C}_1) = 1$ or $\Pi(\tilde{C}_2) = 1$, then we only need to set $\tilde{\Pi}(\df x, \df y) = \Pi(\df x) \Pi(\df y)$.
		Suppose that $\Pi(\tilde{C}_1)$ and $\Pi(\tilde{C}_2)$ are both strictly less than~$1$.
		Define~$\tilde{\Pi}$ as follows:
		\[
		\begin{aligned}
		&\tilde{\Pi}(\df x, \df y) = \Pi(\df x) \Pi(\df y) \times \\
		& \left\{ \frac{1_{\tilde{C}_1 \times (\X \setminus \tilde{C}_2)}(x,y)}{1-\Pi(\tilde{C}_2)}  + \frac{1_{(\X \setminus \tilde{C}_1) \times \tilde{C}_2}(x,y)}{1-\Pi(\tilde{C}_1)} + \frac{[1-\Pi(\tilde{C_1}) - \Pi(\tilde{C}_2)] 1_{(\X \setminus \tilde{C}_1) \times (\X \setminus \tilde{C}_2)}(x,y) }{[1-\Pi(\tilde{C}_1)][1-\Pi(\tilde{C}_2)]} \right\}  \,.
		\end{aligned}
		\]
		It's easy to verify that~$\tilde{\Pi}$ is a probability measure that satisfies~\eqref{eq:tildepi} and that $\tilde{\Pi}(\tilde{C}_1 \times \tilde{C}_2) = 0$.
		This completes the proof.
	\end{proof}
	
	The bivariate drift condition in Proposition~\ref{pro:bivariate} is very common among works that utilize d\&m and coupling.
	Usually,~$V_1$ and~$V_2$ are taken to be the same univariate drift function, and $\tilde{C}$ is either the Cartesian product of a small set with itself, or a level set of the additive bivariate drift function $V_1(x)+V_2(y)$.
	In particular, taking $V_1 = V_2$ and $\tilde{C} = C' \times C'$ yields (C\hyperref[c1]{1}) in Remark~\ref{rem:jerison}.
	
	\section{Proof of Proposition~\ref{pro:mala2}}
	\label{app:mala2}
	
	\begin{proof}
		The proof is an application of
		Theorem~\ref{thm:modeloptima2}.  Let~$n$ be fixed.
		Let $C_n \subset \mathbb{R}^n$ be measurable, and
		denote its diameter by $D_n$.  Note that $\Pi_n(C_n)
		\leq (GD_n)^n$, where $G = \sup_{x \in \mathbb{R}}
		g(x) < \infty$.  When $\Pi_n(C_n) > 1/2$,
		\begin{equation} \label{ine:mala2dn}
		D_n \geq \frac{1 }{2^{1/n}G} \geq \frac{1}{2G}  \,.
		\end{equation}
		Recall that $M > 0$ is a constant defined in Condition
		(H\hyperref[h2]{2}).  In the proof of
		Proposition~\ref{pro:mala} it is shown that, whenever
		$h_nM < 1$,
		\[
		\varepsilon_{C_n}  \leq 2\Phi \left( -\frac{(1-h_n M)D_n}{2\sqrt{2}h_n} \right) \,.
		\]
		If $\Pi_n(C_n) > 1/2$ and thus,~\eqref{ine:mala2dn} holds, then
		\[
		1 - \varepsilon_{C_n} \geq 1 - 2 \Phi \left( - \frac{1-h_nM}{4\sqrt{2h_n}G} \right) \,.
		\]
		Recall that $h_n < n^{-\gamma}$.
		By Theorem~\ref{thm:modeloptima2}, for sufficiently large~$n$, $h_n M \leq 1/\sqrt{2}$, and
		\[
		\rho_{\mbox{\scriptsize{opt}}}^*(P_n) \geq 1 - 2 \Phi \left( - \frac{1-h_n M}{4\sqrt{2h_n}G} \right) \geq 1 - 2 \Phi \left( - \frac{n^{\gamma/2}}{8G} \right) \,.
		\]
		The asymptotic behavior of $\Phi(x)$ as $x \to -
		\infty$ ensures the desired result.
	\end{proof}
	
	\bigskip
	
	\noindent{\bf Acknowledgment.}\, The second author was supported by NSF Grant DMS-15-11945.

	\bibliographystyle{ims} \bibliography{limitationbib}

\end{document}